\newcommand{\im}[1]{\mathbf{ i_{#1}}}
\newcommand{\ims}[1]{\mathbf{ i_{#1}^2}}
\newcommand{\jm}[1]{\mathbf{ j_{#1}}}
\newcommand{\jms}[1]{\mathbf{ j_{#1}^2}}
\newcommand{\gm}[1]{\gamma_{#1}}
\newcommand{\gmc}[1]{\overline{\gamma}_{#1}}
\newcommand{\be}[1]{\mathbf{e_{#1}}}
\newcommand{\bec}[1]{\mathbf{\overline{e}_{#1}}}
\newcommand{\um}[1]{\mathbf{u_{#1}}}
\newcommand{\ums}[1]{\mathbf{u_{#1}^2}}
\newcommand{\vm}[1]{\mathbf{v_{#1}}}
\newcommand{\vms}[1]{\mathbf{v_{#1}^2}}
\newcommand{\mC}{\mathbb{C}}
\newcommand{\mN}{\mathbb{N}}
\newcommand{\mR}{\mathbb{R}}
\newcommand{\mT}{\mathbb{T}}
\newcommand{\mM}{\mathbb{M}}
\newcommand{\mI}{\mathbb{I}}
\newcommand{\mS}{\mathbb{S}}
\newcommand{\mL}{\mathbb{L}}
\newcommand{\cK}{\mathcal{K}}
\newcommand{\cM}{\mathcal{M}}
\newcommand{\cT}{\mathcal{T}}
\newcommand{\ra}{\rightarrow}
\DeclareMathOperator{\spn}{span}
\begin{document}

\title{Classification of Principle 3D Slices of Filled-in Julia Sets in
Multicomplex Spaces
}


\author{Quentin Charles         \and
        Pierre Olivier Parisé 
}


\institute{Quentin Charles \at
              University of Hawai'i at Manoa\\
              2500 Campus Rd, Honolulu, HI, 96822 \\
              \email{qcharles@hawaii.edu}           
           \and
           Pierre-Olivier Parisé \at
              Université du Québec à Trois-Rivières\\ 
              3351 Bd des Forges, Trois-Rivières, QC, G8Z 4M3, Canada\\
              \email{pierre-olivier.parise@uqtr.ca}
}

\date{Received: date / Accepted: date}

\maketitle

\begin{abstract}
A generalization of the filled-in Julia set is presented using the multicomplex numbers and an algorithm is presented to visualize these sets in the tridimensional space. There are many ways to visualize these higher dimensional fractals sets on a computer. We therefore introduce an equivalence relation between 3D representations and show that, for the filled-in Julia sets associated to the polynomial $z^p + c$, there are nine 3D slices when $p$ is an odd integer and four when $p$ is even. These results differs from the recent characterization obtained by Brouillette and Rochon in 2019 and the proofs require different arguments in the context of the filled-in Julia sets.
\keywords{Filled Julia sets \and Multicomplex numbers \and 3D Fractals \and Multicomplex dynamics \and Tricomplex space}
\subclass{37F50 \and 32A30 \and 30G35 \and 00A69}
\end{abstract}

\section{Introduction}

The theory of dynamical systems is a branch of mathematics that studies the evolution of a system that follows specific rules. This discipline was initiated by H. Poincaré following his study of the three-body problem. It became popular among the public thanks to the experiments by E. Lorenz on climate models in 1963, which also gave rise to the expression ``butterfly effect.'' Today, this mathematical theory has become a central tool in many fields of Science and engineering \cite{Mandel1975,May1976}.

One branch of the theory of dynamical systems is holomorphic dynamics, which studies the dynamics generated by the iteration of differentiable functions with complex values. It arose from the work of two French mathematicians, Gaston Julia and Pierre Fatou, published between 1917 and 1920. The work of mathematicians such as B. Mandelbrot, A. Douady, and J. H. Hubbard revived researchers' interest in this theory. Among other things, two sets in particular have attracted attention due to their remarkable properties: the filled-in Julia sets and the Mandelbrot set.

On one hand, given a polynomial function $f (z) = a_n z^n + a_{n-1} z^{n-1} + \cdots + a_1 z + a_0$, the filled-in Julia set associated to the function $f$ is the set of $z$ such that the orbit of $z$ under $f$, that is the sequence $(f^m (z))_{m = 1}^\infty$, where $f^m (z) := f (f^{m-1} (z))$, is bounded. In this paper, we focus mainly on the filled-in Julia sets associated to the polynomials $f_c (z) = z^p + c$, where $p \geq 2$ is an integer. These sets will be denoted by $\cK_{1, c}^p$. 
On the other hand, the Mandelbrot set is the set of parameters $c$ such that the orbit of $0$ is bounded, that is the sequence $(f_c^m (0))_{m = 1}^\infty$ remains bounded. It is usually denoted by $\cM_1^p$.

In recent years, more and more researchers became interested in generalizing the Mandelbrot set and the filled-in Julia sets to higher dimensions. One of the pioneers in this field was A. Norton \cite{Norton1982}. He used the quaternions to render a three dimensional version of a filled-in Julia set. Many others followed Norton's work, but the generalization that gave rise to the richer theory was obtained by Rochon \cite{rochon2000,rochon2003}, Garrant-Pelletier and Rochon \cite{PR2009}, and Parisé and Rochon \cite{PR2015}. They used the so-called \textit{multicomplex numbers}. 

Multicomplex numbers, usually denoted by $\mM \mC (n)$ or $\mM \mC_n$ where $n \geq 0$ is an integer representing the order, are a commutative generalization of complex numbers to higher dimensions that were introduced by C. Segre. They are constructed inductively with $\mM (0) := \mR$ and $\mM \mC (1)$ being an isomorphic copy of the complex numbers.  The main reference for multicomplex numbers is Price's book \cite{Pr1991}. We have seen, in recent years, an explosion of articles referring to applications of the multicomplex numbers, including the generation of 3D fractals. Here is a non exhaustive list of applications of the multicomplex numbers: generalization of the Mandelbrot and filled-in Julia sets to higher dimensions \cite{Brouillette_2019,PR2009,PR2015,rochon2000,rochon2003,Wang2013}, generalization of some equations in physics such as the linear and non-linear Schr{\"o}dinger equations and solitons \cite{B2017,Cen2020,K2023,rochon2004,TVG2017}, and generalization of complex-valued neural networks \cite{Alpay2023}. For instance, in \cite{rochon2004}, the author employed bicomplex numbers to generalize the Schrödinger equation. As a result, he was able to unify two fundamental equations in physics---the Schrödinger equation and the Klein-Gordon equation---within a single framework.

When $n = 2$, the set $\mM \mC (2)$ is the set of bicomplex numbers and these numbers take the form 
    $$  
        a + b \im{1} + c \im{2} + d \jm{1}
    $$
where $a, b, c, d \in \mR$, $\ims{1} = \ims{2} = -1$ with $\im{1} \neq \im{2}$ and $\jms{1} = 1$ with $\jm{1} \neq 1$. Tricomplex numbers have a similar representation but with $8$ components and in general, a multicomplex numbers have a similar representation but with $2^n$ components. Therefore a Mandelbrot set or a filled-in Julia set generalized using multicomplex numbers can't be visualize when the order of the multicomplex numbers used is greater than or equal to $2$. Garrant-Pelletier and Rochon \cite{PR2009} solved this problem by considering three dimensional subspaces of the multicomplex numbers to visualize the  Mandelbrot set and the theory was then further generalized by Parisé and Rochon \cite{PR2015}, and improved by Brouillette and Rochon \cite{Brouillette_2019}.

There is a remarkable phenomena that occurs in the case of the multicomplex Mandelbrot set. Many visualization through different three dimensional subspaces, called principal 3D slices, give rise to the same set and therefore the multicomplex Mandelbrot set exhibits internal symmetries at lower dimensions. In particular, Brouillette and Rochon have shown the following important result.

\begin{theorem}
    Let $p \geq 2$ and $n \geq 3$. Then any principal 3D slices of the multicomplex Mandelbrot set can be obtained from a principal 3D slices of the tricomplex Mandelbrot set. 
\end{theorem}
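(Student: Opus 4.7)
The plan is to exploit the fact that a principal 3D slice of the multicomplex Mandelbrot set $\cM_n^p \subset \mM\mC(n)$ is, by definition, the intersection of $\cM_n^p$ with an $\mR$-subspace of the form $T(1,\mathbf{u}_1,\mathbf{u}_2):=\{a+b\mathbf{u}_1+c\mathbf{u}_2 : a,b,c\in\mR\}$, where $\mathbf{u}_1,\mathbf{u}_2$ are two distinct basis units of $\mM\mC(n)$. Because the iteration $c\mapsto c^p+c_0$ is a polynomial operation, any $\mR$-algebra isomorphism $\varphi$ between two such subspaces (extended to the subalgebra they generate) will send Mandelbrot set to Mandelbrot set. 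So the theorem reduces to producing, for each choice of $(\mathbf{u}_1,\mathbf{u}_2)$ in $\mM\mC(n)$, a matching pair $(\mathbf{v}_1,\mathbf{v}_2)$ in $\mM\mC(3)$ such that the $\mR$-subalgebras they generate are isomorphic.

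First, I would recall that every basis unit of $\mM\mC(n)$ is a product of some subset of the generators $\mathbf{i}_1,\ldots,\mathbf{i}_n$, and squares to $+1$ or $-1$ depending on whether the number of repeated pairs is even or odd. Since $\mM\mC(n)$ is commutative, the subalgebra generated by two such units $\mathbf{u}_1,\mathbf{u}_2$ is spanned as an $\mR$-vector space by $\{1,\mathbf{u}_1,\mathbf{u}_2,\mathbf{u}_1\mathbf{u}_2\}$ and is therefore at most $4$-dimensional. Its isomorphism class is completely determined by the triple of signs $(\mathbf{u}_1^2,\mathbf{u}_2^2,(\mathbf{u}_1\mathbf{u}_2)^2)\in\{+1,-1\}^3$, i.e.\ by at most $8$ possibilities (in fact fewer, by symmetry between $\mathbf{u}_1$ and $\mathbf{u}_2$).

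Second, I would verify that each of these possible sign patterns is already realized by a pair of basis units of $\mM\mC(3)$. The tricomplex numbers contain $\mathbf{i}_1,\mathbf{i}_2,\mathbf{i}_3$ (each squaring to $-1$) together with all their pairwise and triple products $\mathbf{j}_k=\mathbf{i}_a\mathbf{i}_b$ (squaring to $+1$) and $\mathbf{i}_1\mathbf{i}_2\mathbf{i}_3$ (squaring to $-1$). A short case-by-case matching then produces, for every sign triple, a pair $(\mathbf{v}_1,\mathbf{v}_2)$ of basis units of $\mM\mC(3)$ whose multiplication table coincides with that of $(\mathbf{u}_1,\mathbf{u}_2)$. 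The induced $\mR$-algebra isomorphism extends to an $\mR$-linear bijection $\varphi:T(1,\mathbf{u}_1,\mathbf{u}_2)\to T(1,\mathbf{v}_1,\mathbf{v}_2)$ that commutes with the polynomial $z\mapsto z^p+c$, which yields the desired identification of the two principal 3D slices.

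The main obstacle I expect is the bookkeeping in the second step: one must enumerate the possible unit pairs in $\mM\mC(n)$, reduce them (using the symmetries $\mathbf{u}_1\leftrightarrow\mathbf{u}_2$ and $\mathbf{u}_2\leftrightarrow\mathbf{u}_1\mathbf{u}_2$) to a small list of canonical forms, and exhibit an explicit realization of each canonical form inside $\mM\mC(3)$. Once this dictionary is established, the conclusion is immediate, since the iteration defining $\cM_n^p$ only uses the $\mR$-algebra structure and so is transported along $\varphi$.
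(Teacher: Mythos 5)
Your proposal rests on a misreading of what a principal 3D slice is. In the definition used throughout this paper and by Brouillette--Rochon, a principal 3D slice is cut out by an \emph{arbitrary} triple of distinct units $\um{1},\um{2},\um{3}\in\mI(n)$; the unit $1$ need not be among them. By declaring at the outset that every slice has the form $T(1,\um{1},\um{2})$, you have silently discarded all slices with $1\notin\{\um{1},\um{2},\um{3}\}$ --- for instance $\cT^p(\im{1},\im{2},\im{3})$, $\cT^p(\jm{1},\jm{2},\jm{3})$, or $\cT^p(\im{1},\jm{1},\jm{2})$. Those are precisely where the claim has content: already in the even case the analogous Theorem~\ref{Thm:Characterization3DSlicesEven} of this paper exhibits $\cT^p(\im{1},\im{2},\im{3})$ as a separate equivalence class that your enumeration cannot see, and in the odd case the majority of the eight or nine classes do not contain $1$.

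The omission also invalidates the structural step you lean on. When $1\notin\{\um{1},\um{2},\um{3}\}$ the subspace $\mT(\um{1},\um{2},\um{3})$ is not a unital subalgebra; the orbit leaves it after one iteration, and the $\mR$-algebra it generates is generically the $8$-dimensional $\mS(\um{1},\um{2},\um{3})=\spn_\mR\{1,\um{1},\um{2},\um{3},\um{1}\um{2},\um{1}\um{3},\um{2}\um{3},\um{1}\um{2}\um{3}\}$ rather than the $4$-dimensional span of $\{1,\um{1},\um{2},\um{1}\um{2}\}$ you describe; see Theorems~\ref{Thm:ClosureOfSubAlgebra} and~\ref{Thm:CharacterizationIterates}, where the distinct regimes $\mL$, $\mM$, $\mS$ must be tracked. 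Accordingly, the isomorphism invariant you need is not a pair of signs (incidentally, since $(\um{1}\um{2})^2=\ums{1}\ums{2}$, your ``triple'' already collapses to a pair) but the full signature $(\ums{1},\ums{2},\ums{3})$ together with whether a product relation $\um{k}\um{l}=\pm\um{h}$ holds among the three units. That finer invariant drives the case analysis of Section~\ref{Sec:Characterization3DSlices}, and it is what produces classes such as $\cT^p(\jm{1},\jm{2},\jm{3})$, which exists in $\mM\mC(3)$ precisely because $\jm{1}\jm{2}=\pm\jm{3}$ there while an analogue with no such relation requires $n\geq 4$. Any correct proof of the Brouillette--Rochon statement must sort all triples $(\um{1},\um{2},\um{3})$ by this invariant and verify that every class is realized inside $\mM\mC(3)$; your plan treats only the easy sub-case of slices that are already unital subalgebras.
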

No such result exists for the multicomplex filled-in Julia sets, except some rudimentary results on the bicomplex filled-in Julia sets obtained in \cite{Wang2013}.
The main goal of this paper is to therefore introduce the multicomplex filled-in Julia sets associated to the polynomials $f_c$ and to prove a quantitive result on the principal 3D slices which is the quantitative analogue of Brouillette and Rochon's result. 
Our main result reads as followed:
\begin{theorem}\label{Thm:main}
    Let $p \geq 2$,  $n \geq 3$ be integers, and $c$ be a real number.
        \begin{enumerate}
            \item If $p$ is an even integer, then the multicomplex filled-in Julia set has $4$ principal 3D slices for any $n$.
            \item If $p$ is an odd integer and $c = 0$, then the multicomplex filled-in Julia set has $4$ principal 3D slices.
            \item If $p$ is an odd integer and $c \neq 0$, then the multicomplex Mandelbrot set has $8$ principal 3D slices when $n = 3$ and $9$ principal 3D slices for any $n > 3$. 
        \end{enumerate}
\end{theorem}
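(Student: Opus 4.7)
The plan is to follow the philosophy of Brouillette and Rochon's treatment of the Mandelbrot case, but to adapt every step to the filled-in Julia setting, where the iterated variable (not the parameter) ranges over the chosen 3D slice. I would first pin down the equivalence relation and catalogue, for each $n$, the principal 3D slices of $\mM\mC(n)$: these are the three-dimensional $\mR$-subspaces spanned by $1$ together with two distinct generators chosen from the $2^n - 1$ imaginary units of $\mM\mC(n)$. Next I would prove a reduction lemma asserting that, for $n > 3$, every principal 3D slice of $\cK_{n,c}^p$ is equivalent to a principal 3D slice of $\cK_{3,c}^p$, except for at most one new class when $p$ is odd and $c \neq 0$. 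The key observation is that $f_c(w) = w^p + c$ with $c \in \mR$ commutes with the canonical involutions of $\mM\mC(n)$ that permute the building blocks in the inductive construction $\mM\mC(k) = \mM\mC(k-1) + \mM\mC(k-1)\im{k}$, so any slice built from ``high'' units is transported by such an involution into one built from tricomplex units.

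Once reduced to the case $n = 3$, I would enumerate the equivalence classes explicitly. The main algebraic tool is the idempotent decomposition $w = w_1 \be{1} + w_2 \be{2}$ valid in each $\mM\mC(k)$, under which $f_c(w) = f_c(w_1) \be{1} + f_c(w_2) \be{2}$ because $f_c$ has real coefficients. Iterating this down to $\mM\mC(1) \cong \mC$ shows that $w \in \cK_{k,c}^p$ if and only if each of its $2^{k-1}$ complex coordinates lies in the classical planar filled-in Julia set of $f_c$. This yields a concrete Cartesian-product description of each principal 3D slice in terms of classical Julia sets, from which explicit $\mR$-linear isomorphisms between equivalent slices can be written down.

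The three cases of the theorem then separate according to the symmetries available for $f_c$. For even $p$, the identity $f_c(-w) = f_c(w)$ makes $\cK_{n,c}^p$ invariant under $w \mapsto -w$, collapsing all slices into four classes regardless of $n \geq 3$. For odd $p$ with $c = 0$, the homogeneity $f_0(\lambda w) = \lambda^p f_0(w)$ together with rotational invariance makes $\cK_{n,0}^p$ a multicomplex ``disk'' whose internal symmetries again give exactly four classes. For odd $p$ with $c \neq 0$, neither symmetry is present, and the slices split into eight genuinely distinct classes at $n = 3$; one further class appears at $n \geq 4$, arising from a pair of imaginary generators drawn from different levels of the inductive tower that cannot be pulled back into $\mM\mC(3)$ by any $f_c$-compatible involution.

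The main obstacle will be the non-equivalence half of the argument in the odd $p$, $c \neq 0$ case. Equivalences are established mechanically by writing down involutions of $\mM\mC(n)$ that commute with $f_c$ and map one slice onto another. Proving that two candidate slices are \emph{not} equivalent, however, requires an invariant that survives every such involution, and distinguishing the ninth class from the eight tricomplex ones is particularly delicate, because the idempotent decomposition trivially identifies many natural invariants. I expect to need a finer descriptor, such as the topology of the intersection of the slice with real hyperplanes, the number of connected components of that intersection, or the configuration of real fixed points of $f_c$ lying inside the slice, in order to rule out the last remaining equivalences and thereby pin down the counts $4$, $4$, and $8$/$9$ advertised by the theorem.
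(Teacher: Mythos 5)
Your proposal has two genuine gaps, one structural and one that would make the argument fail.

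\textbf{Structural gap: the wrong catalogue of slices and the wrong morphisms.} You describe the principal 3D slices as spanned ``by $1$ together with two distinct generators chosen from the imaginary units.'' That is not what the paper classifies: a principal 3D slice $\mT(\um{1},\um{2},\um{3})$ is spanned by any three distinct units from $\mI(n)$, including ones that exclude $1$ and ones built from hyperbolic units (e.g.\ $\cT^p(\im{1},\im{2},\im{3})$, $\cT^p(\jm{1},\jm{2},\jm{3})$, $\cT^p(\jm{1},\jm{3},\im{4})$). Several of the final equivalence classes in the theorem are precisely slices of this form, so a proof that never considers them cannot arrive at the advertised counts. Relatedly, you propose to realize equivalences by \emph{involutions of the whole algebra $\mM\mC(n)$ commuting with $f_c$}. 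The paper's equivalence is weaker and finer: it only requires a linear bijection between the two ``spaces of iterates'' $\mathrm{It}^p(\um{1},\um{2},\um{3})$ and $\mathrm{It}^p(\vm{1},\vm{2},\vm{3})$, and the paper explicitly points out that this map need not even be defined on the slices $\mT(\cdot)$ themselves (that is why Definition~\ref{Def:EquivalentRelation3Dslices} contains a separate condition~(1) anchoring the first iterate). Working only with global involutions would miss equivalences that the paper establishes and would not, on its own, reproduce the dichotomy driving the proof.

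\textbf{Failing step: the symmetry argument for the count.} You argue that for even $p$ the identity $f_c(-w)=f_c(w)$ ``collapses all slices into four classes,'' and for odd $p$, $c=0$ that homogeneity does the same. These symmetries describe invariance of the set $\cK_{n,c}^p$, not an equivalence between two different 3D slices, so they do not control the class count. What actually drives the classification in the paper is the algebraic characterization of the space of iterates (Theorem~\ref{Thm:CharacterizationIterates}): for even $p$ it equals $\mL(\um{1},\um{2},\um{3}) = \spn_\mR\{1,\um{1}\um{2},\um{1}\um{3},\um{2}\um{3}\}$, which depends on the triple only through the signs $\ums{1},\ums{2},\ums{3}$, giving exactly $4$ classes; for odd $p$, $c=0$ it equals $\mM(\um{1},\um{2},\um{3})$, same conclusion; and for odd $p$, $c\neq 0$ it is either $\mM$ (when $\um{1}=1$ or $\um{i}\um{j}=\pm\um{k}$) or the full $8$-dimensional $\mS$, and the extra class at $n\geq 4$ arises because a triple with $\ums{1}=\ums{2}=\ums{3}=1$ and no relation $\um{i}\um{j}=\pm\um{k}$ is possible only once one has units beyond $\mI(3)$. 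Without this ``space of iterates'' machinery you have no mechanism to distinguish the $\mM$-cases from the $\mS$-cases, which is exactly where your proposal stalls (you concede you would ``need a finer descriptor''). That descriptor is the dimension and multiplicative structure of $\mathrm{It}^p$, and it is the content you are missing.
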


The paper is structured as followed. In Section \ref{Sec:PrelimMulticomplex}, we introduce the algebra of multicomplex numbers. In Section \ref{Sec:ComplexFilledJ}, we introduce the precise definition of the filled-in Julia set and give some of their basic properties. In Section \ref{Sec:MulticomplexFilledJ}, we introduce the multicomplex filled-in Julia sets and present some of their elementary properties. In Section \ref{Sec:Visualisation3D}, we define what is a principal 3D slices of a filled-in Julia set. In section \ref{Sec:DefinitionEquivalence3D}, we introduce our new equivalence relation between the sets of principal 3D slices of a filled-in Julia set and prove a list of preliminary results needed in the proof of our main result. Finally, in Section \ref{Sec:Characterization3DSlices}, we prove our main result on the number of principal 3D slices of the multicomplex filled-in Julia set.

\section{Preliminary on Multicomplex Numbers}\label{Sec:PrelimMulticomplex}
The objective of this section is to introduce the definition of multicomplex numbers, their basic algebraic operations, and their topology. 

\subsection{Multicomplex Numbers}
The Multicomplex number are a generalization of the complex numbers to higher dimensions. The original reference to multicomplex numbers is the book of Price \cite{Pr1991}. For recent and succinct introductions to multicomplex numbers, the reader is also directed to the papers \cite{Brouillette_2019,doyon2022}. Our presentation will mainly follow the paper \cite{doyon2022}.

The set of Multicomplex numbers is denoted by $\mathbb{M} \mC (n)$ for any $n \geq 1$ , where $n$ is an integer representing the order. For $n = 0$, we set $\mM \mC (0) := \mR$ , for $n = 1$, we set $\mM \mC (1) := \{ \eta_1 + \eta_2 \im{1} \, : \, \eta_1, \eta_2 \in \mR \}$ with $\im{1} = \im{}$, the usual imaginary unit, and for any integer $n \geq 2$, the element of $\mM \mC (n)$ are defined recursively as followed:
    $$
        \mathbb{M} \mC (n):=\{\eta_1+\eta_2\im{n} \, : \, \eta_1,\eta_2 \in \mathbb{M} \mC (n-1)\} ,
    $$
where the symbol $\im{n}$ is an imaginary unit such that $\ims{n} = -1$ and $\im{n-1} \neq \im{n}$. Note that with this definition, the usual set of complex numbers $\mathbb{C}$ corresponds to the set $\mathbb{M} \mC (1)$.

Given two multicomplex numbers $\eta = \eta_1 + \eta_2 \im{n}$ and $\zeta = \zeta_1 + \zeta_2 \im{n}$, the operation of addition, denoted $+$, of two multicomplex numbers is defined as followed:
    $$
        \eta + \zeta = ( \eta_1 + \zeta_1 ) + ( \eta_2 + \zeta_2 ) \im{n} ,
    $$
where the addition in each component is the addition defined in $\mM  \mC (n - 1)$ and the operation of multiplication, denoted by $'\cdot'$, between two multicomplex numbers is defined as
    $$
        \eta \cdot \zeta := ( \eta_1 \zeta_1 - \eta_2 \zeta_2 ) + ( \eta_1 \zeta_2 + \eta_2 \zeta_1 ) \im{n},
    $$ 
where the operations in each components are the addition and multiplication coming from $\mM \mC (n -1)$. One can show that the triplet $(\mathbb{M} \mC (n), \cdot, +)$ forms a commutative unitary ring (see, for instance, \cite{Pr1991}). 

When $n = 1$, we obtain the set of complex numbers $\mM \mC (1)$ which, compared to all other cases of multicomplex numbers, is a commutative field. When $n = 2$, we obtain the set of bicomplex numbers and each bicomplex number can be written down as $\eta = \eta_1 + \eta_2 \im{2}$, where $\eta_1 , \eta_2 \in \mM (1)$. Describing each component of the bicomplex number $\eta$ as $\eta_1 = \eta_{11} + \eta_{12} \im{1}$ and $\eta_2 = \eta_{21} + \eta_{22} \im{1}$, where $\eta_{11}, \eta_{12} , \eta_{21} , \eta_{22} \in \mM \mC (0) = \mR$, we can rewrite $\eta$ as
    $$
        \eta = (\eta_{11} + \eta_{12} \im{1}) + (\eta_{21} + \eta_{22} \im{1}) \im{2} .
    $$
Distributing the number $\im{2}$ and relabeling the index of each real component, we obtain
    $$
        \eta = \eta_{1} + \eta_{\im{1}} \im{1} + \eta_{\im{2}} \im{2} + \eta_{\im{1}\im{2}} \im{1}\im{2} .
    $$
We can therefore write any bicomplex number as a $\mM \mC (1)$-linear (complex linear) combination of the units $1$ and $\im{2}$ or as a $\mM \mC (0)$-linear (real linear) combination of the units $1$, $\im{1}$, $\im{2}$, and $\im{1}\im{2}$. 

The decomposition of a bicomplex number as a real-linear combination of units can be generalized to any multicomplex numbers. Let $\mI (n)$ be the set of all units of the form $\im{1}^{a_1} \im{2}^{a_2} \cdots \im{n}^{a_n}$, where $a_j \in \{ 0 , 1\}$. For instance, we have 
    $$
        \mI (2) = \{ 1 , \im{1} , \im{2} , \im{1}\im{2} \} \quad \text{ and } \quad \mI (3) = \{ 1 , \, \im{1} , \, \im{2} , \, \im{1}\im{2} , \, \im{3} , \, \im{1} \im{3} , \, \im{2} \im{3} , \, \im{1}\im{2}\im{3} \} .
    $$
From combinatorial considerations, we can show that the number of elements of $\mI (n)$ is $2^n$.

Within the set $\mI (n)$, there are units that square to $1$ or to $-1$. An imaginary unit is a unit $\um{}$ having the property that $\ums{} = -1$ and a hyperbolic unit is a unit $\um{}$ that have the property that $\ums{} = 1$ and $\um{} \neq 1$. Combinatorial considerations implies that the number of imaginary units in $\mI (n)$ is $2^{n-1}$ and the number of hyperbolic units in $\mI (n)$ is $2^{n-1} - 1$. For more details on hyperbolic units, the reader is referred to \cite{doyon2022}, where the authors found more imaginary and hyperbolic units in the set of multicomplex numbers. For this paper, however, only the set $\mI (n)$ will be considered. Therefore, we can decompose a multicomplex number $\eta$ as
    $$
        \eta = \sum_{\um{} \in \mI (n)} x_{\um{}} \um{} ,
    $$
where $\eta_{\um{}} \in \mM \mC (0) = \mR$ for any $\um{} \in \mI (n)$. 

We presented the example of a bicomplex numbers. Another important example is the set of tricomplex numbers that we will use later in the paper as our optimal space for the 3D slices of the Filled-in Julia sets. Any tricomplex number $\eta$ can be written down as a real-linear combination of the units from the set $\mI (3)$ as followed:
    $$
        \eta = x_1 + x_{\im{1}} \im{1} + x_{\im{2}} \im{2} + x_{\im{1}\im{2}} \im{1}\im{2} + x_{\im{3}} \im{3} + x_{\im{1}\im{3}} \im{1} \im{3} + x_{\im{2}\im{3}} \im{2}\im{3} + x_{\im{1}\im{2}\im{3}} \im{1}\im{2}\im{3}
    $$
  where $x_{\um{}} \in \mR$ for $\um{} \in \mI (3)$. For reference later in the paper, we set $\jm{1} = \im{1}\im{2}$, $\jm{2} := \im{1}\im{3}$, $\jm{3} := \im{2}\im{3}$, and $\im{4} := \im{1}\im{2}\im{3}$.

\subsection{Idempotent Representations}
Define the multicomplex numbers $\be{n}$ and $\bec{n}$, for $n \geq 2$, as followed:
$$
    \be{n} = \frac{1+\im{n-1}\im{n}}{2} \quad \text{  and  } \quad \bec{n}=\frac{1-\im{n-1}\im{n}}{2}
$$
Given $\eta = \eta_1 + \eta \im{n}$, it is easy to see that
    $$  
        \eta \be{n} = (\eta_1 - \eta_2 \im{n-1}) \be{n} \quad \text{ and } \quad \eta \bec{n} = (\eta_1 + \eta_2 \im{n-1})\bec{n} .
    $$
Since $\be{n} + \bec{n} = 1$, we obtain
    $$  
        \eta (\be{n} + \bec{n}) = \eta \be{n} + \eta \bec{n} = (\eta_1 - \eta_2 \im{n-1} ) \be{n} + (\eta_1 + \eta_2 \im{n-1})\bec{n} .
    $$
This is called the idempotent representation of a multicomplex number. We set $\eta_{\be{}} := \eta_1 - \eta_2 \im{n - 1}$ and $\eta_{\bec{n}} := \eta_1 + \eta_2 \im{n-1}$ and write the idempotent representation of a multicomplex number $\eta$ as followed
    $$
        \eta = \eta_{\be{}} \be{} + \eta_{\bec{}} \bec{} .
    $$ 
Because $\be{n} \bec{n} = 0$, $\be{n}^2 = \be{n}$, and $\bec{n}^2 = \bec{n}$, we obtain the following properties of the idempotent representation. 
\begin{theorem}
 Let $\eta=\eta_{\be{}} \be{} + \eta_{\bec{}}\bec{}$ and $\zeta = \zeta_{\be{}}\be{} + \zeta_{\bec{}} \bec{}$ be two multicomplex numbers and $n \geq 2$ be an integer. Then
    \begin{enumerate}
        \item $\eta + \zeta = (\eta_{\be{}} + \zeta_{\be{}}) \be{} + (\eta_{\bec{}} + \zeta_{\bec{}}) \bec{}$.
        \item $\eta \zeta = (\eta_{\be{}} \zeta_{\be{}}) \be{} + (\eta_{\bec{}} \zeta_{\bec{}}) \bec{}$.
        \item $\eta^m = \eta_{\be{}}^m \be{} + \zeta_{\bec{}}^m \bec{}$, for any integer $m \geq 1$. 
    \end{enumerate}
 \end{theorem}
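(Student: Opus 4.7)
The plan is to exploit the three orthogonality/idempotency relations stated immediately before the theorem, namely $\be{n}\bec{n} = 0$, $\be{n}^2 = \be{n}$, and $\bec{n}^2 = \bec{n}$, together with the fact that $(\mM\mC(n), +, \cdot)$ is a commutative ring. Once these are in hand, each of the three assertions reduces to a direct calculation; no deeper structural input is required.

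For part (1), I would simply apply associativity and commutativity of addition in $\mM\mC(n)$ to regroup
$$(\eta_{\be{}}\be{n} + \eta_{\bec{}}\bec{n}) + (\zeta_{\be{}}\be{n} + \zeta_{\bec{}}\bec{n}) = (\eta_{\be{}} + \zeta_{\be{}})\be{n} + (\eta_{\bec{}} + \zeta_{\bec{}})\bec{n},$$
using distributivity of the scalar-like factors $\eta_{\be{}}, \zeta_{\be{}}, \eta_{\bec{}}, \zeta_{\bec{}}$ against $\be{n}$ and $\bec{n}$. For part (2), I would expand the product
$$\eta\zeta = (\eta_{\be{}}\be{n} + \eta_{\bec{}}\bec{n})(\zeta_{\be{}}\be{n} + \zeta_{\bec{}}\bec{n})$$
by distributivity, producing four terms. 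Commutativity of the multicomplex product lets me regroup each term as a scalar coefficient times a product of two idempotent basis elements. Two of these terms contain the factor $\be{n}\bec{n}$ and thus vanish by orthogonality; the surviving two collapse to $\eta_{\be{}}\zeta_{\be{}}\be{n}$ and $\eta_{\bec{}}\zeta_{\bec{}}\bec{n}$ using $\be{n}^2 = \be{n}$ and $\bec{n}^2 = \bec{n}$.

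Part (3) is a straightforward induction on $m$. The base case $m = 1$ is immediate. For the inductive step, assuming $\eta^m = \eta_{\be{}}^m \be{n} + \eta_{\bec{}}^m \bec{n}$, I would apply part (2) to the product $\eta^m \cdot \eta$, with the pair $(\eta_{\be{}}^m, \eta_{\bec{}}^m)$ playing the role of the idempotent components, to conclude $\eta^{m+1} = \eta_{\be{}}^{m+1}\be{n} + \eta_{\bec{}}^{m+1}\bec{n}$.

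There is no genuine obstacle here; the entire proof is a mechanical consequence of the idempotent identities. The only point that deserves a line of justification is that the coefficients $\eta_{\be{}}$, $\eta_{\bec{}}$, $\zeta_{\be{}}$, $\zeta_{\bec{}}$ live in $\mM\mC(n-1)$ and therefore commute with $\be{n}$ and $\bec{n}$ inside $\mM\mC(n)$, which is what legitimizes all the rearrangements above. I would mention this once at the beginning of the proof and then carry out the calculations without further comment.
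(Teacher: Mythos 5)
Your argument is correct and matches the paper's reasoning exactly: the paper states this theorem as an immediate consequence of the idempotent identities $\be{n}\bec{n}=0$, $\be{n}^2=\be{n}$, $\bec{n}^2=\bec{n}$ together with commutativity of the ring, without writing out the computation, and your proposal simply spells out that computation (and the routine induction for part 3). No discrepancy.
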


Using the idempotent representation we can defined an operation similar to the Cartesian product of two sets.
\begin{definition}
 Let $X \subset \mM \mC (n)$, for $n \geq 2$. The set $X$ is called an $\mM \mC (n)$-Cartesian product determined by two sets $X_1 \subset \mM \mC (n - 1)$ and $X_2 \subset \mM \mC (n - 1)$ if the following holds
    $$
        X = X_1 \times_{\be{n}} X_2 = \{ \eta_{\be{}} \be{} + \eta_{\bec{}} \bec{} \, : \, \eta_{\be{}} , \eta_{\bec{}} \in \mM \mC (n - 1) \} .
    $$
\end{definition}

\subsection{Norm}
The norm of a multicomplex number $\eta = \eta_1 + \eta_2 \im{n} \in \mathbb{M}(n)$ is defined recursively as the following quantity:
    $$
        \Vert \eta \Vert_n := \sqrt{\Vert \eta_1 \Vert_{n-1}^2 + \Vert \eta_2 \Vert_{n-1}^2 \big)}
    $$
where $\Vert \eta \Vert_1 := \sqrt{\eta_1^2 + \eta_2^2}$. It can be shown that the expression of the norm of a multicomplex number can be written down in terms of the idempotent components in the idempotent representation of a multicomplex number:
    $$
        \Vert \eta \Vert_n = \sqrt{\frac{\Vert \eta_{\be{}} \Vert_{n-1}^2 + \Vert \eta_{\bec{}} \Vert_{n-1}^2}{2}}
    $$
where $\eta = \eta_1 + \eta_2 \im{n} = \eta_{\be{}} \be{} + \eta_{\bec{}} \bec{}$. 

The topology used on $\mM \mC (n)$ is the one induced by the metric $d (\eta , \zeta ) := \Vert \eta - \zeta \Vert_n$. The open balls are the set $B_n (\eta , r) := \{ \zeta \in \mM \mC (n) \, : \, \Vert \zeta - \eta \Vert_n < r \}$. As a consequence, the $\mM \mC (n)$-cartesian product preserved closedness, compactness, openness, and connectedness of sets, that is
    \begin{enumerate}
        \item If $X_1$ and $X_2$ are closed sets in $\mM \mC (n-1)$, then the set $X_1 \times_{\be{n}} X_2$ is closed in $\mM \mC (n)$.
        \item If $X_1$ and $X_2$ are compact sets in $\mM \mC (n-1)$, then the set $X_1 \times_{\be{n}} X_2$ is compact in $\mM \mC (n)$.
        \item If $X_1$ and $X_2$ are open sets in $\mM \mC (n-1)$, then the set $X_1 \times_{\be{n}} X_2$ is open in $\mM \mC (n)$.
        \item If $X_1$ and $X_2$ are connected sets, then the set $X_1 \times_{\be{n}} X_2$ is a connected set.
    \end{enumerate}

\section{Preliminaries on the Filled-in Julia Sets in the Complex Plane}\label{Sec:ComplexFilledJ}

In this section, we define the Filled-in Julia sets, denoted by $\cK_{1, c}^p$, over the complex plane $\mM \mC (1)$. We will show some basic properties of the set $\cK_{1, c}^p$. These properties will be essential to draw pictures of its generalization to the bicomplex and tricomplex spaces. 

\subsection{Definition}
For a fixed complex number $c \in \mM \mC (1)$, the filled-in Julia set associated to the polynomial function $f_c (z) = z^p + c$ is defined as followed:
    $$
        \cK_{1, c}^p := \{ z \in \mM \mC (1) \, : \, \text{ the sequence } (f_c^n (z))_{n = 1}^\infty \text{ is bounded} \} .
    $$
Here the notation $f_c^n (z)$ stands for the $n$-th iterate of the function $f_c$ at $z$, meaning $f_c^n (z) = f (f^{n - 1} (z))$ when $n \geq 2$ and $f_c (z) = z^p + c$ when $n = 1$. 

\subsection{Basic Properties}
The first important property of the complex Filled-in Julia sets is the fact that they are compact subsets of the complex plane. We can in fact identify exactly the radius of the disk containing a filled-in Julia set.

\begin{theorem}
Let $c \in \mM \mC (1)$. Then $\cK_{1, c}^p$ is a compact set. In particular, we have that $\cK_{1, c}^p \subseteq \overline{B (0, R)}$, where $R := \max \{ |c| , 2^{1/(p-1)} \}$. 
\end{theorem}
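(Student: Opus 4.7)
The plan is to split the statement into two claims: (i) every point outside $\overline{B(0,R)}$ escapes to infinity under iteration, which gives $\cK_{1,c}^p \subseteq \overline{B(0,R)}$ and hence boundedness; and (ii) $\cK_{1,c}^p$ is closed. Together with boundedness in $\mM\mC(1)\cong\mR^2$, closedness will deliver compactness.

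For (i), the key estimate comes from the reverse triangle inequality. Fix $z$ with $|z|>R$. Since $R\geq 2^{1/(p-1)}$, we have $|z|^{p-1}>2$, so the quantity $\lambda:=|z|^{p-1}-1$ satisfies $\lambda>1$. Since $R\geq |c|$, we also have $|c|<|z|$, hence
\[
|f_c(z)|\;\geq\;|z|^p-|c|\;>\;|z|^p-|z|\;=\;\lambda\,|z|.
\]
In particular $|f_c(z)|>|z|>R$, so the same estimate can be iterated with a ratio $\lambda'=|f_c(z)|^{p-1}-1\geq\lambda>1$. By an easy induction, $|f_c^m(z)|\geq\lambda^m|z|\to\infty$, so the orbit is unbounded and $z\notin\cK_{1,c}^p$. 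This proves $\cK_{1,c}^p\subseteq\overline{B(0,R)}$.

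For (ii), the cleanest route is to observe that the preceding step also shows the stronger statement
\[
\cK_{1,c}^p\;=\;\bigcap_{n=0}^{\infty}(f_c^n)^{-1}\bigl(\overline{B(0,R)}\bigr).
\]
The inclusion $\subseteq$ is the contrapositive of the escape argument: if some iterate lies outside $\overline{B(0,R)}$, the orbit escapes and $z\notin\cK_{1,c}^p$. The inclusion $\supseteq$ is immediate because an orbit uniformly bounded by $R$ is bounded. Each set on the right-hand side is closed as the preimage of a closed ball under the continuous map $f_c^n$, so their intersection is closed. Together with the containment in the closed ball $\overline{B(0,R)}$ proved in (i), this yields that $\cK_{1,c}^p$ is a closed and bounded subset of $\mC$, hence compact by Heine--Borel.

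No step poses a real obstacle; the only subtlety is to be careful with the strict-versus-weak inequality in the escape estimate, making sure that $R=\max\{|c|,2^{1/(p-1)}\}$ (rather than a strictly larger radius) is enough to push the ratio $\lambda$ above $1$. The bound $|z|>R$ with $R\geq 2^{1/(p-1)}$ is exactly what gives $\lambda>1$, which is what drives the geometric escape.
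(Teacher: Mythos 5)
Your proof is correct, and it is a leaner variant of the paper's argument. Both proofs reach the same core inequality $|f_c(z)|>|z|\bigl(|z|^{p-1}-1\bigr)$ via the reverse triangle inequality, but the paper then writes $|z|=R+\delta$ and invokes Bernoulli's inequality to extract the explicit escape factor $1+(p-1)\delta$ and then inducts with that fixed factor. You instead note directly that $\lambda:=|z|^{p-1}-1>1$ (because $|z|>R\geq 2^{1/(p-1)}$) and that $\lambda$ is nondecreasing along the orbit, giving $|f_c^m(z)|\geq\lambda^m|z|\to\infty$; this avoids Bernoulli entirely. You also prove more than the paper does: for compactness the paper simply cites Falconer's text, while you derive it from the identity $\cK_{1,c}^p=\bigcap_{n\geq 0}(f_c^n)^{-1}\bigl(\overline{B(0,R)}\bigr)$, each factor being a closed preimage under a continuous map, followed by Heine--Borel. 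That intersection characterization is essentially the content the paper postpones to its subsequent algorithmic theorem (its Theorem 5), so your argument consolidates the cited compactness fact and that later result into one self-contained proof.
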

\begin{proof}
The fact that $\cK_{1, c}^p$ is a compact subset of the complex plane is a known fact from the theory of holomorphic dynamics. A reference for this result is \cite[Theorem 14.2]{F2014}.

We will prove the second part of the theorem. Let $R$ be as in the statement of the theorem. We will show that $\mM \mC (1) \backslash \overline{B (0, R)} \subset \mM \mC (1) \backslash \cK_{1, c}^p$, which is equivalent to the statement we want to show. 

Assume that $z \not\in \overline{B (0, R)}$. Then $|z| > R$ and 
    $$
    |f_c (z)| = |z^p + c| \geq |z|^p - |c| > |z|^p - |z| = |z| (|z|^{p-1} - 1) .
    $$
where, in the second inequality, we used the fact that $|c| \leq R$. Since $z \not\in \overline{B (0, R)}$, then $|z| = R + \delta$ for some positive real number $\delta$. This implies that
    $$
        |z|^{p-1} - 1 = (R + \delta)^{p-1} - 1 = R^{p-1} (1 + \delta / R)^{p-1} - 1 .
    $$
Recall Bernouilli's inequality $(1 + x)^m \geq 1 + mx$ for any $x \geq 0$ and $m \in \mN$. Set $x = \delta /R$ and $m = p-1 \in \mN$, then we obtain the following inequality:
    $$
        |z|^{p-1} - 1 \geq R^{p-1} (1 + (p-1) \delta / R) - 1 = R^{p-1} + (p-1) \delta R^{p-2} - 1 .
    $$
Using the fact that $R \geq 2^{1/(p-1)}$, we have that $R^{p-1} - 1 \geq 1$ and we obtain the following lowerbound:
    $$
    |z|^{p-1} - 1 \geq 1 + (p-1) \delta R^{p-2} .
    $$
Also, since $p - 2 \geq 0$ and $R > 1$, we have that $R^{p-2} \geq 1$ and this means
    $$
    |z|^{p-1} - 1 \geq 1 + (p-1) \delta. 
    $$
Therefore, we obtain
    $$
    |f_c (z)| > |z| (1 + (p-1) \delta ) .
    $$
Assume that $|f_c^k (z)| = |z| (1 + (p-1) \delta )^k$ for some integer $k$. Then, we have
    $$
        |f_c^{k + 1} (z)| \geq |f_c^k (z)| (1 + (p-1) \delta ) \geq |z| (1 + (p-1) \delta )^k (1 + (p-1) \delta ) = |z| (1 + (p-1) \delta )^{k + 1} .
    $$
Hence, by the Principle of Induction, we just proved that if $|z| > R$, then
    $$
    |f_c^n (z)| \geq |z| (1 + (p-1) \delta )^{n} \quad (\forall n \in \mN ) .
    $$
Since $1 + (p-1) \delta > 1$, then $\lim_{n \ra \infty} |f_c^n (z)| = \infty$ and therefore the sequence $(f_c^n (z))_{n = 1}^\infty$ is unbounded. This means $z \not\in \cK_{1, c}^p$. This concludes the proof. \qed
\end{proof}

From the above proof, we can extract an important lemma that will be at the core of the justification of the algorithm to draw filled-in Julia sets.

\begin{lemma}\label{Lem:LowerBoundForIteration}
    Let $c \in \mM \mC (1)$ and $R := \max \{ |c| , 2^{1/(p-1)} \}$. If there exists a positive integer $m$ such that $|f_c^m (z)| > R$, then there is some positive real number $\delta$ such that $|f_c^{m + n} (z)| \geq |f_c^{m} (z)| (1 + (p-1) \delta )^n$ for any integer $n \geq 1$. 
\end{lemma}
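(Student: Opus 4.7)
The plan is to essentially repackage the estimate already carried out in the proof of the preceding theorem, applied to the iterate $w := f_c^m (z)$ rather than to $z$ itself. The hypothesis $|f_c^m (z)| > R$ plays exactly the same role in the proposed argument as the hypothesis $|z| > R$ played in the theorem, so no new analytic content is needed; the work is just identifying the right starting point of the iteration.

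Concretely, I would set $w := f_c^m (z)$ and write $|w| = R + \delta$ for some real number $\delta > 0$, which is possible precisely because $|w| > R$. Then I would repeat the chain of inequalities from the theorem with $w$ in place of $z$: starting from $|f_c (w)| = |w^p + c| \geq |w|^p - |c| \geq |w|(|w|^{p-1} - 1)$, then using Bernoulli's inequality $(1 + \delta/R)^{p-1} \geq 1 + (p-1)\delta/R$, then the bounds $R^{p-1} \geq 2$ and $R^{p-2} \geq 1$, to conclude that
\begin{equation*}
    |f_c (w)| \geq |w| (1 + (p-1)\delta).
\end{equation*}

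Next I would run an induction on $n \geq 1$ exactly as in the theorem: if $|f_c^k (w)| \geq |w|(1+(p-1)\delta)^k$, then because $|f_c^k (w)| \geq |w| > R$, the previous inequality applied to $f_c^k (w)$ gives $|f_c^{k+1}(w)| \geq |f_c^k (w)|(1+(p-1)\delta) \geq |w|(1+(p-1)\delta)^{k+1}$. Translating back via $f_c^n (w) = f_c^{m+n}(z)$ and $|w| = |f_c^m(z)|$ yields the desired inequality
\begin{equation*}
    |f_c^{m+n}(z)| \geq |f_c^m (z)| (1 + (p-1)\delta)^n
\end{equation*}
for all integers $n \geq 1$.

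If anything could be called the main obstacle, it is just the bookkeeping point that the induction step requires $|f_c^k (w)| > R$ in order to reuse the one-step estimate; but this is automatic from the lower bound $|f_c^k (w)| \geq |w| > R$, so the argument closes cleanly. Everything else is a direct transcription of the computation already performed in the proof of the preceding theorem.
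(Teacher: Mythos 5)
Your proof is correct and follows essentially the same route as the paper's: both set $\delta := |f_c^m(z)| - R$, apply the one-step growth estimate from the preceding theorem to the iterate $w = f_c^m(z)$, and run the same induction on $n$. You are in fact slightly more explicit than the paper about the bookkeeping point that the induction step needs $|f_c^k(w)| > R$, which is a minor improvement in clarity.
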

\begin{proof}
Assume that $|f_c^m (z)| > R$ for some positive integer $m$. Let $\delta > 0$ be such that $|f_c^m (z)| = R + \delta$. Reproducing the proof of the preceding Theorem with $z$ replaced by $f_c^m (z)$, we can show that
    $$
        |f_c^{m + 1} (z)| \geq |f_c^m (z)| (1 + (p-1) \delta ) .
    $$
This is the base case $n = 1$. Assuming that $|f_c^{m + k} (z)| \geq |f_c^m (z)| (1 + (p-1) \delta )^k$, we then have
    $$
    |f_c^{m + k + 1} (z)| \geq |f_c^{m + k} (z)| (1 + (p - 1) \delta ) \geq |f_c^m (z)| (1 + (p-1) \delta )^{k + 1}
    $$
which establishes the induction step. The Principle of Induction then concludes the proof. \qed
\end{proof}

\subsection{An Algorithm}
We can now prove the following theorem which gives an algorithm to draw pictures of filled-in Julia sets in the complex plane.

\begin{theorem}\label{Thm:AlgorithmDrawComplexFilledInJuliaSets}
    Let $z, c \in \mM \mC (1)$ and $R := \max \{ |c| , 2^{1/(p-1)} \}$. The following assertions are equivalent:
        \begin{enumerate}
            \item $z \in \cK_{1, c}^p$.
            \item $|f_c^n (z)| \leq R$ for any positive integer $n$.
        \end{enumerate}
\end{theorem}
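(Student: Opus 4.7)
The plan is to prove the equivalence by handling the two implications separately, with the nontrivial direction obtained by contraposition together with the already-established Lemma \ref{Lem:LowerBoundForIteration}.

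For the direction $(2) \Rightarrow (1)$, I would simply observe that if $|f_c^n(z)| \leq R$ for every positive integer $n$, then the orbit $(f_c^n(z))_{n=1}^\infty$ is bounded (by $R$), which is exactly the defining condition for membership in $\cK_{1,c}^p$. So this direction is immediate from the definition and needs no further argument.

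For the direction $(1) \Rightarrow (2)$, I would argue the contrapositive: assume there exists some positive integer $m$ with $|f_c^m(z)| > R$, and deduce that the orbit of $z$ is unbounded, hence $z \notin \cK_{1,c}^p$. This is precisely the situation covered by Lemma \ref{Lem:LowerBoundForIteration}, which furnishes $\delta > 0$ such that
\[
|f_c^{m+n}(z)| \geq |f_c^m(z)|\,(1 + (p-1)\delta)^n \quad \text{for all } n \geq 1.
\]
Since $p \geq 2$ and $\delta > 0$, the factor $1+(p-1)\delta$ is strictly greater than $1$, so the right-hand side tends to $\infty$ as $n \to \infty$. Consequently $|f_c^{m+n}(z)| \to \infty$, the orbit of $z$ is unbounded, and $z \notin \cK_{1,c}^p$, completing the contrapositive.

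The main obstacle, namely the quantitative escape estimate guaranteeing geometric growth once an iterate leaves $\overline{B(0,R)}$, has already been isolated in Lemma \ref{Lem:LowerBoundForIteration}, so no further analytic work is required here. The proof of the theorem is really just the packaging of that lemma into an ``all or nothing'' dichotomy: either every iterate stays in $\overline{B(0,R)}$, or a single escape triggers divergence. I would present it in two short paragraphs, one per implication, and conclude with \qed.
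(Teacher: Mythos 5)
Your proposal is correct and matches the paper's proof essentially line for line: the direction $(2)\Rightarrow(1)$ is immediate from the definition, and $(1)\Rightarrow(2)$ is proved by contraposition via Lemma~\ref{Lem:LowerBoundForIteration} and the observation that $1+(p-1)\delta>1$ forces the iterates to diverge.
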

\begin{proof}
    If $|f_c^n (z)| \leq R$ for any positive integer $n$, then the sequence $(f_c^n (z))_{n = 1}^\infty$ is bounded and hence $z \in \cK_{1, c}^p$. This proves the implication (2) $\Rightarrow$ (1).

    To prove the implication (1) $\Rightarrow$ (2), we will prove the contrapositive. Assume that there is a positive integer $m$ such that $|f_c^m (z)| > R$. By Lemma \ref{Lem:LowerBoundForIteration}, we deduce that for some $\delta > 0$, we have
        $$
        |f_c^{m + n} (z)| \geq |f_c^m (z)| (1 + (p-1) \delta )^n \quad (\forall n \in \mN ) .
        $$
    Since $1 + (p-1) \delta > 1$, then $\lim_{k \ra \infty} |f^k_c (z)| = \infty$ and hence $z \not\in \cK_{1, c}^p$. The contrapositive of the implication (1) $\Rightarrow$ (2) is therefore true and hence the implication is also true. \qed
\end{proof}

From Theorem \ref{Thm:AlgorithmDrawComplexFilledInJuliaSets}, we can construct an algorithm to draw a filled-in Julia set in the complex plane. Fix $c \in \mM \mC (1)$, $R = \max \{ |c| , 2^{1/(p-1)} \}$ and $N \in \mN$. For each complex number in the square $[-R , R] \times [-R , R]$, 
    \begin{enumerate}
        \item Compute the $n$-th iterate $f_c^n (z) = f_c (f_c^{n- 1} (z))$, for $n = 1, 2, \ldots , N$.
        \item At every iteration, compute $W := |f_c^n (z)|$ and verify the following conditions:
            \begin{enumerate}
                  \item If $W > R$ for some $n < N$, then assume that $z \not\in \cK_{1, c}^p$ and assign a color to the number $z$.
                  \item if $W \leq R$ for every $n = 1 , 2 , \ldots , N$, then assume that $z \in \cK_{1, c}^p$ and assign a different color than the one chosen in (a) to the number $z$.
            \end{enumerate}
    \end{enumerate}

\section{Higher Dimensional Filled-in Julia Sets}\label{Sec:MulticomplexFilledJ}

We now generalize the filled-in Julia sets to higher dimensions. This was done previously in \cite{PR2009} for the polynomial $f_c (\zeta ) = \zeta^2 + c$. We generalize it here for the polynomial $f_c (\zeta ) = \zeta^p + c$.

Throughout this section, the function $f_c$ will be the polynomial $f_c (\zeta ) = \zeta^p + c$, where $\zeta , c \in \mM \mC (n)$ and $p, n$ are positive integers greater than or equal to $2$. As we used in the previous section, the $m$-th iterate of the function $f_c$ is defined as $f_c^m (\zeta ) = f (f_c^{m-1} (\zeta ))$ when $m \geq 2$ and $f_c^1 ( \zeta ) = f_c ( \zeta )$ when $m = 1$.

\subsection{Multicomplex Filled-in Julia Sets}

The multicomplex filled-in Julia set associated to the polynomial $f_c$, with $c \in \mM \mC (n)$ is defined as followed.

\begin{definition}
    Let $c \in \mM \mC (n)$. The multicomplex filled-in Julia set associated to $f_c$ is denoted by $\cK_{n, c}^p$ and 
        $$
            \cK_{n, c}^p := \{ \zeta \in \mM \mC (n) \, : \, \text{ the sequence } (f_c^m (\zeta ))_{m = 1}^\infty \text{ is bounded} \} .
        $$
\end{definition}

When $n = 1$, we recover the definition of the filled-in Julia sets in the complex plane. Two other cases are of importance in this paper:
    \begin{enumerate}
        \item When $n = 2$, $\mM \mC (2)$ is the set of bicomplex numbers. We therefore obtain the bicomplex version of the filled-in Julia sets denoted by $\cK_{2, c}^p$ (see \cite{rochon2003} for the case $p = 2$).
        \item When $n = 3$, $\mM \mC (3)$ is the set of tricomplex numbers. We therefore obtain the tricomplex version of the filled-in Julia sets denoted by $\cK_{3, c}^p$ (see \cite{PR2009} for $p = 2$).
    \end{enumerate}
The case of the tricomplex numbers will be particularly important for our main result.

\subsection{Basic properties}

Some of the basic properties stated in this section were already mentioned, without proofs, in \cite{BPR2019}. For the sake of the reader, we provide the complete proofs of these results here.

Using the idempotent representation of a multicomplex number, we can obtain the following important decomposition of a multicomplex filled-in Julia set.

\begin{theorem}\label{Thm:CartesianProductFilledJulia}
    Let $n \geq 2$ be an integer and $c = c_{\be{n}} \be{n} + c_{\bec{n}} \bec{n} \in \mM \mC (n)$, with $c_{\be{n}} , c_{\bec{n}} \in \mM \mC (n - 1)$ the idempotent components of $c$. Then we have
        $$
            \cK_{n, c}^p = \cK_{n - 1 , c_{\be{n}}}^p \times_{\be{n}} \cK_{n - 1, c_{\bec{n}}}^p .
        $$
\end{theorem}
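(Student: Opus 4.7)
The plan is to exploit the fact that the dynamics $\zeta \mapsto \zeta^p + c$ decouples completely in the idempotent representation, and then translate boundedness of the orbit in $\mM\mC(n)$ into simultaneous boundedness of two orbits in $\mM\mC(n-1)$ via the norm identity.

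First, I would write an arbitrary $\zeta \in \mM\mC(n)$ as $\zeta = \zeta_{\be{}} \be{n} + \zeta_{\bec{}} \bec{n}$ and use the theorem on the idempotent representation (addition componentwise, multiplication componentwise, and powers componentwise) to compute
$$
f_c(\zeta) = \zeta^p + c = \bigl(\zeta_{\be{}}^p + c_{\be{n}}\bigr)\be{n} + \bigl(\zeta_{\bec{}}^p + c_{\bec{n}}\bigr)\bec{n} = f_{c_{\be{n}}}(\zeta_{\be{}})\,\be{n} + f_{c_{\bec{n}}}(\zeta_{\bec{}})\,\bec{n}.
$$
A straightforward induction on $m$ then yields, for every $m \geq 1$,
$$
f_c^{\,m}(\zeta) = f_{c_{\be{n}}}^{\,m}(\zeta_{\be{}})\,\be{n} + f_{c_{\bec{n}}}^{\,m}(\zeta_{\bec{}})\,\bec{n},
$$
so the idempotent components of the orbit of $\zeta$ under $f_c$ are exactly the orbits of $\zeta_{\be{}}$ and $\zeta_{\bec{}}$ under $f_{c_{\be{n}}}$ and $f_{c_{\bec{n}}}$ respectively.

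Next, I would invoke the norm identity derived in the Norm subsection,
$$
\Vert f_c^{\,m}(\zeta) \Vert_n = \sqrt{\tfrac{1}{2}\bigl(\Vert f_{c_{\be{n}}}^{\,m}(\zeta_{\be{}}) \Vert_{n-1}^2 + \Vert f_{c_{\bec{n}}}^{\,m}(\zeta_{\bec{}}) \Vert_{n-1}^2\bigr)},
$$
to conclude that the sequence $\bigl(f_c^{\,m}(\zeta)\bigr)_{m=1}^{\infty}$ is bounded in $\mM\mC(n)$ if and only if both $\bigl(f_{c_{\be{n}}}^{\,m}(\zeta_{\be{}})\bigr)_{m=1}^{\infty}$ and $\bigl(f_{c_{\bec{n}}}^{\,m}(\zeta_{\bec{}})\bigr)_{m=1}^{\infty}$ are bounded in $\mM\mC(n-1)$. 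This is immediate because each summand under the square root is nonnegative, so the full norm is bounded precisely when each component norm is bounded.

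Finally, translating this equivalence through the definitions gives the set equality: $\zeta \in \cK_{n,c}^p$ iff $\zeta_{\be{}} \in \cK_{n-1, c_{\be{n}}}^p$ and $\zeta_{\bec{}} \in \cK_{n-1, c_{\bec{n}}}^p$, which is exactly the definition of $\cK_{n-1, c_{\be{n}}}^p \times_{\be{n}} \cK_{n-1, c_{\bec{n}}}^p$. There is no real obstacle here; the only care required is keeping the inductive step on iterates rigorous and being explicit that the norm formula gives a two-sided boundedness equivalence (neither component can blow up without the total norm blowing up, and conversely boundedness of both components bounds the total norm).
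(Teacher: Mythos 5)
Your proposal is correct and follows essentially the same route as the paper: decompose the iterates componentwise via the idempotent representation, then use the norm formula $\Vert \eta \Vert_n = \sqrt{(\Vert \eta_{\be{}} \Vert_{n-1}^2 + \Vert \eta_{\bec{}} \Vert_{n-1}^2)/2}$ to convert boundedness of the multicomplex orbit into simultaneous boundedness of the two component orbits. The only difference is that you spell out the base case and induction for the iterate identity, which the paper simply asserts.
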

\begin{proof}
    Let $n$ and $c$ be as in the statement of the theorem. Using the idempotent representation, for any $m \geq 1$ and $\zeta = \zeta_{\be{n}}\bec{n} + \zeta_{\bec{n}} \bec{n}$, we have
        $$  
            f_c^m (\zeta ) = f_{c_{\be{n}}}^m (\zeta_{\be{n}}) \be{n} + f_{c_{\bec{n}}}^m (\zeta_{\bec{n}}) \bec{n} 
        $$
    and therefore
        $$
            \Vert f_c^m (\zeta ) \Vert_n = \sqrt{\frac{\Vert f_{c_{\be{n}}}^m (\zeta_{\be{n}}) \Vert_{n-1}^2 + \Vert f_{c_{\bec{n}}}^m (\zeta_{\bec{n}}) \Vert_{n-1}^2}{2}} .
        $$
    It immediately follows from this last equality that the sequence $(f_c^m (\zeta ))_{m = 1}^\infty$ is bounded if and only if the two sequences $(f_{c_{\be{n}}}^m (\zeta_{\be{n}}))_{m=1}^\infty$ and $(f_{c_{\bec{n}}}^m (\zeta_{\bec{n}}))_{m=1}^\infty$ are bounded. Hence, $\zeta \in \cK_{n, c}^p$ if and only if $\zeta_{\be{n}} \in \cK_{n-1, c_{\be{n}}}^p$ and $\zeta_{\bec{n}} \in \cK_{n-1, c_{\bec{n}}}^p$ if and only if $\zeta \in \cK_{n-1, c_{\be{n}}}^p \times_{\be{n}} \cK_{n-1, c_{\bec{n}}}^p$. This completes the proof. \qed
\end{proof}
{\noindent}We collect below two subcases of the previous result. 
    \begin{enumerate}
    \item When $n = 2$, we have $\cK_{2, c}^p = \cK_{1, c_{\be{2}}}^p \times_{\gm{1}} \cK_{1, c_\bec{2}}$, where $c = c_{\be{2}} \be{2} + c_{\bec{2}} \bec{2}$.
        \item When $n = 3$, we obtain $\cK_{3, c}^p = \cK_{2, c_{\be{3}}}^p \times_{\be{3}} \cK_{2, c_\bec{3}}^p$, where $c = c_{\be{3}} \gm{3} + c_{\bec{3}} \gmc{3}$.
    \end{enumerate}

The next result will be important in the next section to draw pictures of the higher dimensional filled-in Julia sets.

\begin{theorem}\label{Thm:MulticomplexFilledInAlgorithm}
    Let $n \geq 1$ be an integer and $c \in \mM \mC (n)$. Let $R := \max \{ |c| , 2^{1/(p-1)} \}$. Then the following assertions are equivalent:
        \begin{enumerate}
            \item $\zeta \in \cK_{n, c}^p$.
            \item $\Vert f_c^m (\zeta ) \Vert_n \leq R$ for every integer $m \geq 1$. 
        \end{enumerate}
\end{theorem}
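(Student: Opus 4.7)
The direction (2) $\Rightarrow$ (1) is immediate: a uniformly bounded orbit is in particular bounded, so $\zeta \in \cK_{n, c}^p$. For (1) $\Rightarrow$ (2) the plan is to argue by contrapositive, closely following the template of Lemma \ref{Lem:LowerBoundForIteration} and Theorem \ref{Thm:AlgorithmDrawComplexFilledInJuliaSets}.

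The principal obstruction is that, in contrast with the complex case, the multicomplex norm is not multiplicative, so in general $\Vert \eta^p \Vert_n \neq \Vert \eta \Vert_n^p$. The first step is therefore to establish the one-sided inequality
$$\Vert \eta^p \Vert_n \geq \Vert \eta \Vert_n^p \qquad (\eta \in \mM \mC (n), \ p \geq 1).$$
I would prove this by induction on $n$. The base case $n = 1$ is the usual multiplicativity of the complex modulus. For the inductive step, the idempotent identity $\eta^p = \eta_{\be{}}^p \be{n} + \eta_{\bec{}}^p \bec{n}$ combined with the norm formula from Section \ref{Sec:PrelimMulticomplex} gives
$$\Vert \eta^p \Vert_n^2 = \frac{\Vert \eta_{\be{}}^p \Vert_{n-1}^2 + \Vert \eta_{\bec{}}^p \Vert_{n-1}^2}{2} \geq \frac{\Vert \eta_{\be{}} \Vert_{n-1}^{2p} + \Vert \eta_{\bec{}} \Vert_{n-1}^{2p}}{2},$$
where the inequality is the induction hypothesis applied componentwise. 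Applying the power-mean (equivalently Jensen/convexity) inequality $(a^p + b^p)/2 \geq ((a+b)/2)^p$ with $a = \Vert \eta_{\be{}} \Vert_{n-1}^2$ and $b = \Vert \eta_{\bec{}} \Vert_{n-1}^2$ then bounds the right-hand side from below by $\Vert \eta \Vert_n^{2p}$, closing the induction. This convexity step is the main conceptual ingredient and the reason a naive inductive bound via the Cartesian product decomposition (which only yields $\Vert f_c^m (\zeta) \Vert_n \leq \sqrt{2} R$) cannot reach the sharp constant $R$.

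With the inequality available, the rest is a direct transposition of the complex argument. Assume $\Vert f_c^m (\zeta ) \Vert_n > R$ for some integer $m$, set $\eta := f_c^m (\zeta )$, and write $\Vert \eta \Vert_n = R + \delta$ with $\delta > 0$. The triangle inequality for $\Vert \cdot \Vert_n$, together with $\Vert c \Vert_n \leq R < \Vert \eta \Vert_n$, yields
$$\Vert f_c ( \eta ) \Vert_n \geq \Vert \eta^p \Vert_n - \Vert c \Vert_n \geq \Vert \eta \Vert_n^p - \Vert \eta \Vert_n = \Vert \eta \Vert_n ( \Vert \eta \Vert_n^{p-1} - 1 ).$$
The Bernoulli manipulation carried out in the proof of Theorem \ref{Thm:AlgorithmDrawComplexFilledInJuliaSets} involves only the nonnegative reals $\Vert \eta \Vert_n$ and $R$ and the assumption $R \geq 2^{1/(p-1)}$, so it transfers verbatim to produce $\Vert \eta \Vert_n^{p-1} - 1 \geq 1 + (p-1) \delta$, and hence $\Vert f_c (\eta ) \Vert_n \geq \Vert \eta \Vert_n ( 1 + (p-1) \delta )$. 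A routine induction on $k$, exactly as in Lemma \ref{Lem:LowerBoundForIteration}, then delivers
$$\Vert f_c^{m+k} (\zeta ) \Vert_n \geq \Vert \eta \Vert_n ( 1 + (p-1)\delta )^k \longrightarrow \infty \quad \text{as } k \to \infty .$$
Hence the orbit of $\zeta$ is unbounded and $\zeta \notin \cK_{n, c}^p$, establishing the contrapositive and completing the proof.
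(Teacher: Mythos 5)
Your proof is correct, and it takes a genuinely different route from the paper's. The paper argues by induction on $n$: it decomposes $\cK_{n,c}^p$ as an $\mM\mC(n)$-Cartesian product via Theorem~\ref{Thm:CartesianProductFilledJulia}, applies the inductive hypothesis to each idempotent component, and reassembles the bound through the norm formula $\Vert \eta \Vert_n^2 = \bigl(\Vert \eta_{\be{}} \Vert_{n-1}^2 + \Vert \eta_{\bec{}} \Vert_{n-1}^2\bigr)/2$. You instead work directly on $\mM\mC(n)$ by first proving the one-sided multiplicativity estimate $\Vert \eta^p \Vert_n \geq \Vert \eta \Vert_n^p$ (idempotent power identity plus Jensen applied to $t \mapsto t^p$), and then transposing the complex escape argument verbatim, since the multicomplex norm still satisfies the triangle inequality. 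The difference in where the technical weight lies matters: your side remark about the naive reassembly is apposite, because when the induction hypothesis is invoked at level $n-1$ with parameter $c_{\be{}}$ the escape radius that it delivers is $\max\{\Vert c_{\be{}} \Vert_{n-1}, 2^{1/(p-1)}\}$, and $\Vert c_{\be{}} \Vert_{n-1}$ can be as large as $\sqrt{2}\,\Vert c \Vert_n$ (take $c_{\bec{}} = 0$), so the constant $R$ at level $n$ does not automatically propagate through the recursive step. The paper's inductive write-up does not address this, whereas your direct route avoids it altogether. The price you pay is proving the auxiliary norm inequality, but that lemma is clean, genuinely true with equality only when the two idempotent components have equal modulus, and arguably of independent use elsewhere in the multicomplex setting.
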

\begin{proof}
    The proof will be by induction on $n \geq 1$. The case $n = 1$ was proved in the previous section and is Theorem \ref{Thm:AlgorithmDrawComplexFilledInJuliaSets}.

    Now, assume that the statement of the theorem is true for $n = k$, that is, for any $c \in \mM \mC (k)$, (1) is equivalent to (2). We have to prove that (1) is equivalent to (2) for $n = k + 1$. Let $\zeta , c \in \mM \mC (k + 1)$ be expressed in their idempotent representation:
        $$
            c = c_{\be{k+1}} \be{k+1} + c_{\bec{k+1}} \bec{k+1} \quad \text{ and } \quad \zeta = \zeta_{\be{k+1}} \be{k+1} + \zeta_{\bec{k+1}} \bec{k+1}  ,
        $$
    where $c_{\be{k+1}} , c_{\bec{k+1}}, \zeta_{\be{k+1}} , \zeta_{\bec{k+1}} \in \mM \mC (k)$. To simplify the notations, we set $c_1 := c_{\be{k+1}}$, $c_2 := c_{\bec{k+1}}$, $\zeta_1 := \zeta_{\be{k+1}}$, and $\zeta_2 := \zeta_{\bec{k+1}}$.
    
    Assume that $\Vert f_c^m (\zeta ) \Vert_n \leq R$ for any integer $m \geq 1$. Then, from the definition of the filled-in Julia set, $\zeta \in \cK_{k+1, c}^p$. So, (2) implies (1).

    Assume that $\zeta \in \cK_{n, c}^p$. Then, $\zeta_{1} \in \cK_{k, c_{1}}^p$ and $ \zeta_{2} \in \cK_{k, c_{2}}^p$ by Theorem \ref{Thm:CartesianProductFilledJulia}. By the induction hypothesis, we get
        $$
            \Vert f_{c_1}^m (\zeta_1) \Vert_k \leq R \quad \text{ and } \quad \Vert f_{c_2}^m (\zeta_2) \Vert_k \leq R
        $$
    for any integer $m \geq 1$. Using the fact that for any integer $m \geq 1$, we have
        $$
            f_{c}^m (\zeta) = f_{c_1}^m (\zeta_1) \be{k + 1} + f_{c_2}^m (\zeta_2) \bec{k + 1} ,
        $$
    and using the expression of the norm in terms of the idempotent components, we get
        $$
            \Vert f_c^m (\zeta ) \Vert_{k + 1} = \sqrt{\frac{\Vert f_{c_1}^m (\zeta_1) \Vert_k^2 + \Vert f_{c_2}^m (\zeta_2) \Vert_k^2}{2}} \leq \sqrt{R^2} = R .
        $$
    Hence $\Vert f_c^m (\zeta ) \Vert_{k + 1} \leq R$ and the statement is proved for $n = k + 1$. 

    By the Principle of Mathematical Induction, the proof is complete. \qed 
\end{proof}

\section{Visualization through 3D Slices}\label{Sec:Visualisation3D}
In this section, we will be interested in visualizing the higher dimensional version of the filled-in Julia sets over the multicomplex numbers. Thoughout this section, the letter $n$ is used to denote a positive integer greater than or equal to $2$. We also let $p$ be a positive integer greater than or equal to $2$.

In order to visualize the multicomplex filled-in Julia sets, we define the concept of a 3D slice. We will first need to define (real) vector subspaces of $\mM \mC (n)$.

\begin{definition}
    Let $\um{1}, \um{2} , \um{3} \in \mI (n)$ be distinct. We define the following (real) vector subspace of $\mM \mC (n )$:
        $$
        \mT (\um{1}, \um{2} , \um{3}) := \spn_\mR \{ \um{1} , \um{2} , \um{3} \} := \{ x \um{1} + y \um{2} + z \um{3} \, : \, x, y, z \in \mR \} .
        $$
\end{definition}
{\noindent}For instance, if $n = 3$, then we have $\mI (3) = \{ 1, \im{1} , \im{2} , \jm{2} , \im{3} , \jm{2} , \jm{3} , \im{4} \}$. The principal units $\um{1} , \um{2} , \um{3}$ can therefore be real, imaginary, or hyperbolic in nature. We can now define a principal 3D slices of a multicomplex filled-in Julia set.

\begin{definition}
    Let $c \in \mM \mC (n)$ and let $\um{1} , \um{2} , \um{3} \in \mI (n)$ be distinct. A principal 3D slice of the filled-in Julia set $\cK_{n, c}^p$ is defined as followed:
        $$
            \cT^p (\um{1} , \um{2} , \um{3} ) := \{ \zeta \in \mT (\um{1} , \um{2} , \um{3}) \, : \, \text{the sequence } (f_c^m (\zeta ))_{m = 1}^\infty \text{ is bounded} \} .
        $$
\end{definition}

From Theorem \ref{Thm:MulticomplexFilledInAlgorithm}, we can construct an algorithm to visualize multicomplex filled-in Julia sets in 3D for $n \geq 2$. Fix $c \in \mM \mC (n)$, $R = \max \{ \Vert c \Vert_n , 2^{1/(p-1)} \}$ and $N \in \mN$. Fix also three distinct principal units $\um{1}, \um{2}, \um{3} \in \mI (n)$. For each multicomplex number $\zeta = \zeta_1 \um{1} + \zeta_2 \um{2} + \zeta_3 \um{3}$, where $\zeta_1, \zeta_2, \zeta_3 \in [-R , R] $, 
    \begin{enumerate}
        \item Compute the $m$-th iterate $f_c^m (\zeta ) = f_c (f_c^{m- 1} (\zeta ))$, for $m = 1, 2, \ldots , M$.
        \item At every iteration, compute $W := \Vert f_c^m (\zeta )\Vert_n$ and verify the following conditions:
            \begin{enumerate}
                  \item If $W > R$ for some $m < M$, then assume that $\zeta \not\in \cK_{n, c}^p$ and assign a color to the number $\zeta$.
                  \item if $W \leq R$ for every $m = 1 , 2 , \ldots , M$, then assume that $\zeta \in \cK_{n, c}^p$ and assign a different color than the one chosen in (a) to the number $\zeta$.
            \end{enumerate}
    \end{enumerate}
See Figure \ref{Fig:3DPSlices-Examples} for two examples of principal 3D slices visualized using the above algorithm. More powerful methods exist to visualize principal 3D slices, see \cite{Pierre_Guillaume_Dominic_2019}.

\begin{figure}
    \centering
    \begin{subfigure}{0.4\textwidth}
        \includegraphics[width=\textwidth]{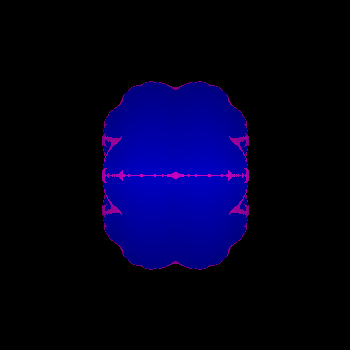}
        \caption{$\cT^2(1,\im{1},\im{2})$}
        \label{fig:octahedron}
    \end{subfigure}
    \begin{subfigure}{0.4\textwidth}
        \includegraphics[width=\textwidth]{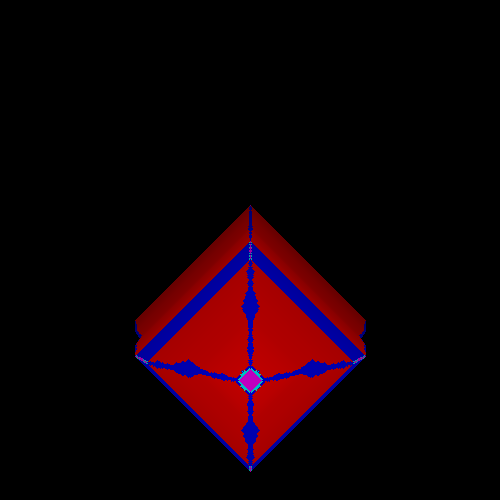}
        \caption{$\cT^3(\jm{1},\jm{3},\im{4})$}
        \label{fig:TCJulia_j1-j3-i4.png}
    \end{subfigure}
    \caption{Graphical results of principal 3D slices of the filled-in Julia set obtained using the algorithm}\label{Fig:3DPSlices-Examples}
\end{figure}

\section{Definitions and Preliminaries for the Characterization of 3D slices}\label{Sec:DefinitionEquivalence3D}

In this section, we present the main tools needed to obtain our main result on the characterization of the principal 3D slices. 

\subsection{An equivalent relation}
The next definition introduces the space of iterates. 
    \begin{definition}
    Let $c \in \mM (n )$ and $\um{1} , \um{2} , \um{3} \in \mI (n)$. The (real) vector subspace of iterates is defined as followed:
    $$
        \mathrm{It}^p (\um{1} , \um{2} , \um{3} ) := \spn_{\mR} \{ f_c^m (\zeta ) \, : \, \zeta \in \mT (\um{1} , \um{2} , \um{3} ) \text{ and } m \in \mN \} .
    $$
    \end{definition}
When we write "characterization of the 3D slices", what we mean is to find the equivalent classes of the following equivalent relation.

\begin{definition}\label{Def:EquivalentRelation3Dslices}
    Let $c \in \mM (n)$. Let $\um{1} , \um{2} , \um{3} \in \mI (n)$ be distinct principal units and let $\vm{1} , \vm{2} , \vm{3} \in \mI (n)$ be three other distinct principal units\footnote{The units $\vm{1}$, $\vm{2}$, $\vm{3}$ might be the same as the units $\um{1}$, $\um{2}$, $\um{3}$. The only important fact is that you choose a set of three different units.}. Define $M_1 := \mathrm{It}^p (\um{1} , \um{2} , \um{3})$ and $M_2 := \mathrm{It}^p (\vm{1} , \vm{2} , \vm{3} )$. The 3D slices $\cT^p (\um{1} , \um{2} , \um{3})$ and $\cT^p (\vm{1} , \vm{2} , \vm{3})$ are \textit{equivalent}, denoted by $\cT^p (\um{1} , \um{2} , \um{3}) \sim \cT^p (\vm{1} , \vm{2} , \vm{3})$ if and only if there exists a linear bijective application $\varphi : M_1 \ra M_2$ such that 
    \begin{enumerate}
        \item\label{Eq:ConditionEquivalent3DSlicesFirstIterates} $\varphi (f_c (x \um{1} + y \um{2} + z \um{3})) = f_c (x \vm{1} + y \vm{2} + z \vm{3})$, for any $x, y, z \in \mR$.
        \item\label{Eq:ConditionEquivalent3DSlices} $\varphi (f_c (\zeta )) = f_c (\varphi (\zeta ))$, for any $\zeta \in  \mathrm{It}^p (\um{1} , \um{2} , \um{3} )$. 
    \end{enumerate}
\end{definition}

Condition \eqref{Eq:ConditionEquivalent3DSlicesFirstIterates} guarantees the first iterates with a starting point in $\mT (\um{1}, \um{2}, \um{3})$ and in $\mT (\vm{1}, \vm{2}, \vm{3})$ respectively are the same. This is especially important if $\mT (\um{1}, \um{2}, \um{3}) \not\subseteq \mathrm{It}^p (\um{1}, \um{2} , \um{3})$ or $\mT (\vm{1}, \vm{2}, \vm{3}) \not\subseteq \mathrm{It}^p (\vm{1}, \vm{2} , \vm{3})$.

Condition \eqref{Eq:ConditionEquivalent3DSlices} guarantees the subsequent iterates behave in the same way. Indeed, an induction argument shows that
    $$
        \varphi (f_c^m (\zeta )) = f_c^m (\varphi (\zeta )) .
    $$
Since $\varphi$ is a bijective linear map, the inverse is also a bijective map and $\varphi$ and $\varphi^{-1}$ are continuous. Therefore, for $\zeta \in \mT (\um{1}, \um{2}, \um{3})$ and $\eta \in \mT (\vm{1}, \vm{2}, \vm{3})$, the sequence $(f_c^m (\zeta ))_{m = 1}^{\infty}$ is bounded if and only if the sequence $(f_c^m (\eta ))_{m = 1}^{\infty}$ is bounded. 

Notice that Definition \ref{Def:EquivalentRelation3Dslices} is similar to Definition 5 in \cite{Brouillette_2019}. However in general the map $\varphi$ can't be applied directly to $\cT^p (\um{1}, \um{2}, \um{3})$ to obtain $\cT^p (\vm{1}, \vm{2}, \vm{3})$ because it might not be defined on the subspaces $\mT (\um{1}, \um{2}, \um{3})$ and $\mT (\vm{1}, \vm{2}, \vm{3})$.  In opposition to what was just said, the map $\varphi$ in the definition in \cite{Brouillette_2019} is always defined on the subspaces $\mT (\um{1}, \um{2}, \um{3})$ and $\mT (\vm{1}, \vm{2}, \vm{3})$. Therefore, our definition covers both cases, the multicomplex filled-in Julia sets and the multicomplex Mandelbrot sets.

\begin{theorem}
    Let $c \in \mM (n)$. The relation $\sim$ is an equivalent relation on the set of 3D slices of $\cK_{n, c}^p$, meaning that $\sim$ is a reflexive, symmetric, and transitive binary operation on the family of principal 3D slices.
\end{theorem}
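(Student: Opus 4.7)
The plan is to verify the three properties one at a time by exhibiting the identity, the inverse, and the composition as the witnessing linear bijections. Each verification reduces to straightforward algebraic manipulation using the two conditions of Definition \ref{Def:EquivalentRelation3Dslices}.

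For \emph{reflexivity}, take $\varphi$ to be the identity map on $M_1 := \iter^p(\um{1},\um{2},\um{3})$. It is a linear bijection of $M_1$ onto itself, and both conditions in Definition \ref{Def:EquivalentRelation3Dslices} collapse to the tautology $f_c(\zeta)=f_c(\zeta)$. For \emph{symmetry}, suppose $\varphi:M_1\to M_2$ witnesses $\cT^p(\um{1},\um{2},\um{3})\sim\cT^p(\vm{1},\vm{2},\vm{3})$. I would propose $\varphi^{-1}:M_2\to M_1$, which is automatically a linear bijection. Condition (1) for $\varphi^{-1}$ follows by applying $\varphi^{-1}$ to both sides of condition (1) for $\varphi$. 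For condition (2), pick $\eta\in M_2$ and write $\eta=\varphi(\zeta)$ with $\zeta\in M_1$; condition (2) for $\varphi$ gives $\varphi(f_c(\zeta))=f_c(\varphi(\zeta))=f_c(\eta)$, and applying $\varphi^{-1}$ yields $\varphi^{-1}(f_c(\eta))=f_c(\zeta)=f_c(\varphi^{-1}(\eta))$.

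For \emph{transitivity}, suppose $\varphi:M_1\to M_2$ and $\psi:M_2\to M_3$ witness the two equivalences. I would use $\psi\circ\varphi:M_1\to M_3$, which is a linear bijection as a composition of such. Condition (1) follows by chaining the two given instances of (1); condition (2) follows from the direct computation
$$
(\psi\circ\varphi)(f_c(\zeta))=\psi(\varphi(f_c(\zeta)))=\psi(f_c(\varphi(\zeta)))=f_c(\psi(\varphi(\zeta)))=f_c((\psi\circ\varphi)(\zeta)),
$$
where the second and third equalities use condition (2) for $\varphi$ and $\psi$ respectively.

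The proof is essentially bookkeeping, so I do not anticipate any serious mathematical obstacle. The one point that requires minor care is that whenever we write $\varphi(f_c(\zeta))$ we need $f_c(\zeta)$ to lie in the domain of $\varphi$; since $\iter^p(\um{1},\um{2},\um{3})$ is only an $\mR$-linear span of iterates and $f_c$ is nonlinear, this compatibility is a tacit requirement built into the definition rather than a free consequence of it. The same implicit compatibility propagates through the inverse and the composition, so it is preserved under the constructions above and no separate verification is required.
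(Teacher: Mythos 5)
Your proof is correct and follows essentially the same route as the paper: identity for reflexivity, $\varphi^{-1}$ for symmetry (deriving its condition (2) by writing $\eta=\varphi(\zeta)$ and applying $\varphi^{-1}$), and composition for transitivity. Your closing remark about $f_c(\zeta)$ needing to lie in the domain of $\varphi$ is a genuine subtlety that the paper leaves implicit, and you handle it the same way the paper does — by treating it as built into the definition.
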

\begin{proof}
We have to show that the binary operation $\sim$ is reflexive, symmetric, and transitive.
    \begin{enumerate}[label=(\alph*)]
        \item Let $\cT^p(\um{1},\um{2},\um{3})$ be a principal 3D slice. Let $M_1 = M_2 = \mathrm{It}^p (\um{1}, \um{2}, \um{3})$ and let $\varphi : M_1 \ra M_2$ be the identity map $\varphi (\zeta ) = \zeta$. Then, all conditions in the definition of the relation $\sim$ are automatically verified and $\cT^p (\um{1}, \um{2}, \um{3}) \sim \cT^p (\um{1}, \um{2} , \um{3})$.
        \item Let $\cT^p (\um{1}, \um{2}, \um{3})$ and $\cT^p (\vm{1}, \vm{2}, \vm{3})$ be two principal 3D slices such that 
            $$
                \cT^p (\um{1}, \um{2}, \um{3}) \sim \cT^p (\vm{1}, \vm{2}, \vm{3}) .
            $$ 
        Then there is a bijective map $\varphi : M_1 \ra M_2$, where $M_1 = \mathrm{It}^p (\um{1}, \um{2}, \um{3})$ and $M_2 = \mathrm{It}^p (\vm{1}, \vm{2} , \vm{3})$, such that
            \begin{enumerate}[label=(\arabic*)]
                \item $\varphi (f_c (x \um{1} + y \um{2} + z \um{3})) = f_c (x \vm{1} + y \vm{2} + z \vm{3})$, for any $x, y, z \in \mR$.
                \item$\varphi (f_c (\zeta )) = f_c (\varphi (\zeta ))$, for any $\zeta \in  \mathrm{It}^p (\um{1} , \um{2} , \um{3} )$. 
            \end{enumerate}
        Let $\psi := \varphi^{-1}$. Therefore, $\psi : M_2 \ra M_1$ and from the condition (1), we get
            \begin{align*} 
                \psi (f_c (x \vm{1} + y \vm{2} + z \vm{3})) &= \varphi^{-1}\big(\varphi (f_c (x \um{1} + y \um{2} + z \um{3}))\big) \\ 
                &= f_c (x \um{1} + y \um{2} + z \um{3}) .
            \end{align*}
        Also, writing $\zeta = \varphi (\psi (\zeta ))$, from condition (2), we obtain
            \begin{align*}
                f_c^m (\zeta ) = f_c^m (\varphi (\psi (\zeta ))) = \varphi (f_c^m (\psi (\zeta )) 
            \end{align*}
        and taking the inverse of $\varphi$ on each side, we get
            $$
                \psi (f_c^m (\zeta )) = f_c^m (\psi (\zeta )) .
            $$
        Hence, $\phi$ satisfies the two conditions in Definition \ref{Def:EquivalentRelation3Dslices} and $\cT^p (\vm{1}, \vm{2}, \vm{3}) \sim \cT^p (\um{1}, \um{2}, \um{3})$. 
        \item To simplify the notation, we will omit the units in the 3D slices. Let $\cT_1^p$, $\cT_2^p$, and $\cT_3^p$ be three principal 3D slices such that $\cT_1^p \sim \cT_2^p$ and $\cT_2^p \sim \cT_3^p$. Then there exist two bijective maps $\varphi_1 : M_1 \ra M_2$ and $\varphi_2 : M_2 \ra M_3$ such that conditions (1) and (2) from Definition \ref{Def:EquivalentRelation3Dslices} are satisfied. Then it is not hard to see that the map $\psi : M_1 \ra M_3$ defined by $\psi (\zeta ) = \varphi_2 (\varphi_1 (\zeta))$ satisfies conditions (1) and (2) of Definition \ref{Def:EquivalentRelation3Dslices}. Hence, $\cT_1^p \sim \cT_3^p$ and this ends the proof. \qed
    \end{enumerate}    
\end{proof}

\subsection{Characterization of the space of iterates}

The goal now is to find simple expressions for the space of iterates. To do so, we introduce the following (real) vector subspaces.
    \begin{enumerate}
        \item $\mM (\um{1} , \um{2} , \um{3}) := \spn_{\mR} \{ \um{1} , \um{2} , \um{3} , \um{1}\um{2}\um{3} \}$.
        \item $\mL (\um{1} , \um{2} , \um{3} ) := \spn_{\mR} \{ 1 , \um{1}\um{2} , \um{1}\um{3} , \um{2} \um{3} \}$.
        \item $\mS (\um{1} , \um{2} , \um{3}) := \spn_{\mR} \{ 1 , \um{1} , \um{2} , \um{3} , \um{1}\um{2} , \um{1}\um{3} , \um{2}\um{3} , \um{1}\um{2}\um{3}\}$.
    \end{enumerate}
If one of the $\um{1}$, $\um{2}$, $\um{3}$ is equal to $1$, then we will always assume, without loss of generality, that $\um{1} = 1$. We can now prove the following results.
\begin{theorem}\label{Thm:ClosureOfSubAlgebra}
    Let $\um{1} , \um{2} , \um{3} \in \mI (n)$. Then
        \begin{enumerate}
            \item\label{it:ClosureofS} The space $\mS (\um{1} , \um{2} , \um{3})$ is closed under multiplication.
            \item\label{it:EqualityBwMandL} If $\um{1} = 1$ or $\um{i} \um{j} = \pm \um{l}$ for some index $i, j, l \in \{ 1 , 2, 3 \}$, then $\mM (\um{1} , \um{2} , \um{3}) = \mL (\um{1} , \um{2} , \um{3})$.
            \item\label{it:LclosedUnderMultiplication} The space $\mL (\um{1} , \um{2} , \um{3} )$ is closed under multiplication.
            \item\label{it:M-absorb-L} If $\eta \in \mM (\um{1} , \um{2} , \um{3})$ and $\zeta \in \mL (\um{1} , \um{2} , \um{3} )$, then $\eta \zeta \in \mM (\um{1} , \um{2} , \um{3} )$. 
            \item\label{it:M-times-M-equals-L} If $\eta , \zeta \in \mM (\um{1} , \um{2} , \um{3})$, then $\eta \zeta \in \mL (\um{1} , \um{2} , \um{3} )$. 
            \item\label{it:T-subspaceOfL} If $u_1 = 1$ or $\um{k} \um{l} = \pm \um{h}$ for some index $k, l, h \in \{ 1, 2, 3 \}$, then $\mT (\um{1}, \um{2}, \um{3}) \subseteq \mL (\um{1}, \um{2}, \um{3})$.
        \end{enumerate}
\end{theorem}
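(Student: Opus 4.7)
The plan is to prove all six assertions by reducing each to a finite verification on the spanning units. Every claim concerns a closure or containment of a real linear span, so bilinearity of multiplication in $\mM\mC(n)$ reduces the verification to computing products of basis units; combined with the commutativity of $\mM\mC(n)$ and the identity $\um{}^2 = \pm 1$ for any $\um{} \in \mI(n)$, every such product is, up to a real sign, again of the form $\um{1}^{a_1}\um{2}^{a_2}\um{3}^{a_3}$ with $a_i \in \{0,1\}$, that is, one of the eight generators of $\mS(\um{1},\um{2},\um{3})$.

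This observation immediately gives part (\ref{it:ClosureofS}): any product of two of the eight generators of $\mS$ collapses to a generator up to sign. Parts (\ref{it:LclosedUnderMultiplication}), (\ref{it:M-absorb-L}), and (\ref{it:M-times-M-equals-L}) follow by the same mechanism, with explicit bookkeeping at the basis level. The key reductions are $(\um{i}\um{j})^2 = \pm 1$ and $(\um{i}\um{j})(\um{i}\um{k}) = \pm\um{j}\um{k}$ for (\ref{it:LclosedUnderMultiplication}); $\um{i}\cdot(\um{i}\um{j}) = \pm\um{j}$, $\um{i}\cdot(\um{j}\um{k}) = \um{1}\um{2}\um{3}$ when $\{i,j,k\} = \{1,2,3\}$, and $(\um{1}\um{2}\um{3})\cdot(\um{i}\um{j}) = \pm\um{k}$ for the complementary index $k$, for (\ref{it:M-absorb-L}); and $\um{i}\cdot(\um{1}\um{2}\um{3}) = \pm\um{j}\um{k}$ together with $(\um{1}\um{2}\um{3})^2 = \pm 1$ and $\um{i}^2 = \pm 1$ for (\ref{it:M-times-M-equals-L}).

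For parts (\ref{it:EqualityBwMandL}) and (\ref{it:T-subspaceOfL}), I would split on the hypothesis. If $\um{1} = 1$, direct substitution gives $\mM(1,\um{2},\um{3}) = \mL(1,\um{2},\um{3}) = \spn_{\mR}\{1,\um{2},\um{3},\um{2}\um{3}\}$ and $\mT(1,\um{2},\um{3}) = \spn_{\mR}\{1,\um{2},\um{3}\} \subseteq \mL$. If instead $\um{i}\um{j} = \pm\um{l}$, after relabeling I may assume $\um{1}\um{2} = \pm\um{3}$. Multiplying this identity by $\um{1}$, by $\um{2}$, and by $\um{3}$ in turn, and reducing squares, yields $\um{1}\um{3} = \pm\um{2}$, $\um{2}\um{3} = \pm\um{1}$, and $\um{1}\um{2}\um{3} = \pm 1$. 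These relations force both $\mM$ and $\mL$ to coincide with $\spn_{\mR}\{1,\um{1},\um{2},\um{3}\}$, and show that $\mT \subseteq \mL$.

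The proof is almost entirely bookkeeping, so I anticipate no conceptual obstacle. The only mild subtlety is sign tracking: the value of $\um{i}^2$ is $+1$ or $-1$ depending on the parity of the number of $\im{k}$ factors appearing in $\um{i}$ (equivalently, whether $\um{i}$ is hyperbolic or imaginary), and the signs in the identities above vary accordingly. Because $\mM$, $\mL$, $\mS$, and $\mT$ are real linear spans, however, each $\pm$ ambiguity is absorbed into the spanning set and does not affect any of the claimed equalities or inclusions.
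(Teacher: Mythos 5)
Your proof is correct and takes essentially the same approach as the paper: both reduce every claim to finite verifications on basis units, using commutativity of $\mM\mC(n)$ and $\ums{i} = \pm 1$, with sign ambiguities absorbed into the real linear spans. The paper delegates part (1) to a cited lemma of Brouillette and Rochon and displays an explicit multiplication table for part (3) while omitting the analogous details for parts (4)--(6), whereas you supply the unit-level reductions directly, but the underlying argument is the same.
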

\begin{proof}
    See \cite[Lemma 2]{Brouillette_2019} for the complete proof of part (1).

    To show (2), assume first that $\um{1} = 1$. From the definition of both subspaces, we get the equality $\mM (1, \um{2} , \um{3}) = \mL (1 , \um{2} , \um{3})$. Now, assume, without loss of generality, that $\um{2} \um{3} = \pm \um{1}$. From that additional assumption, we see that $\um{1}\um{2} \um{3} = \pm 1 = d (1)$ with $d = \pm 1$, $\um{1} \um{2} = \pm \ums{2} \um{3} = a \um{3}$ with $a = \pm \ums{2} \neq 0$, $\um{1} \um{3} = \pm \ums{3} \um{2} = b \um{2}$ with $b = \pm \ums{3} \neq 0$ and $\um{2} \um{3} = \pm \um{1} = c \um{1}$ with $c = \pm 1 \neq 0$. Therefore, we get
        $$
            \eta = A \um{1} + B \um{2} + C \um{3} + D \um{1} \um{2} \um{3} = D d + Ac \um{2} \um{3} + Bb \um{1} \um{3} + Ca \um{1} \um{2}
        $$
    and hence $\eta \in \mM (\um{1} , \um{2} , \um{3})$ if and only if $\eta \in \mL (\um{1} , \um{2} , \um{3})$. 

    To show (3), it is sufficient to show that every multiplication of the basis elements generating $\mL (\um{1} , \um{2} , \um{3} )$ is again one of the basis elements. This can be achieved by constructing the following multiplication table:
    \begin{center}
        \begin{tabular}{c||c|c|c|c}
            $\cdot$ & $1$ & $\um{1}\um{2}$ & $\um{1} \um{3}$ & $\um{2}\um{3}$ \\\hline\hline
            $1$ & $1$ & $\um{1}\um{2}$ & $\um{1} \um{3}$ & $\um{2}\um{3}$ \\\hline
            $\um{1}\um{2}$ & $\um{1} \um{2}$ & $\pm 1$ & $\pm \um{2}\um{3}$ & $\pm \um{1} \um{3}$ \\\hline
            $\um{1} \um{3}$ & $\um{1}\um{3}$ & $\pm \um{2}\um{3}$ & $\pm 1$ & $\pm \um{1}\um{2}$ \\\hline
            $\um{2} \um{3}$ & $\um{2}\um{3}$ & $\pm \um{1}\um{3}$ & $\pm \um{1}\um{2}$ & $\pm 1$  
        \end{tabular}
    \end{center}
    Hence, $\mL (\um{1}, \um{2} , \um{3})$ is closed under multiplication.

    To show parts (4) and (5), we use the same strategy that was used to show (3). We therefore omit their proofs and this concludes the proof of the statement. \qed
\end{proof}

We can now prove the following result regarding the precise expression of the vector subspaces $\mathrm{It}^p (\um{1}, \um{2}, \um{3})$. 

\begin{theorem}\label{Thm:CharacterizationIterates}
    Let $\um{1} , \um{2} , \um{3} \in \mI (n)$ and $c \in \mathbb{R}$. Then
        \begin{enumerate}
            \item\label{Thm:CharacterizationIteratesEvenCase} If $p$ is even, then $\mathrm{It}^p (\um{1} , \um{2} , \um{3}) = \mL (\um{1} , \um{2} , \um{3} )$.
            \item\label{Thm:CharacterizationIteratesOddCZero} If $p$ is odd and if $c = 0$, then $\mathrm{It}^p (\um{1} , \um{2} , \um{3} ) = \mM (\um{1} , \um{2} , \um{3} )$. 
            \item If $p$ is odd and if $c \neq 0$, then 
                \begin{enumerate} 
                \item\label{Thm:CharacterizationIteratesOddCNotZeroClosed} $\mathrm{It}^p (\um{1}, \um{2}, \um{3}) = \mM (\um{1}, \um{2}, \um{3} )$ if $\um{1} = 1$ or if $\um{k}\um{l} = \pm \um{h}$ for some distinct $k, l, h \in \{ 1, 2, 3\}$.
                \item\label{Thm:CharacterizationIteratesOddCNotZeroNotClosed} $\mathrm{It}^p (\um{1} , \um{2} , \um{3} ) = \mS (\um{1} , \um{2} , \um{3} )$ otherwise.
                \end{enumerate}
        \end{enumerate}
\end{theorem}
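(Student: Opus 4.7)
For each of the three cases, the plan is to establish the two inclusions $\iter^p(\um{1},\um{2},\um{3}) \subseteq W$ and $W \subseteq \iter^p(\um{1},\um{2},\um{3})$, where $W$ denotes $\mL$, $\mM$, or $\mS$ as appropriate. The upper bound will follow from a parity analysis of the multinomial expansion of $\zeta^p$ combined with the closure properties of Theorem~\ref{Thm:ClosureOfSubAlgebra}, and the lower bound will be obtained by evaluating $f_c^m$ at judicious inputs $\zeta \in \mT(\um{1},\um{2},\um{3})$ and verifying that the resulting iterates span the claimed subspace.

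\textbf{Upper bound.} For $\zeta = x\um{1} + y\um{2} + z\um{3}$, each monomial $\um{1}^a\um{2}^b\um{3}^d$ arising in the multinomial expansion of $\zeta^p$ reduces, via $\um{i}^2 = \epsilon_i \in \{\pm 1\}$, to a real scalar times the product of those $\um{i}$'s whose exponent is odd. Since $a+b+d = p$, the number of odd exponents has the parity of $p$: when $p$ is even there are $0$ or $2$ odd exponents, so $\zeta^p \in \mL$; when $p$ is odd there are $1$ or $3$, so $\zeta^p \in \mM$. The induction from $f_c^m(\zeta)$ to $f_c^{m+1}(\zeta)$ then goes as follows. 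If $p$ is even, $c \in \mR \subseteq \mL$ and closure of $\mL$ under multiplication (part~(3) of Theorem~\ref{Thm:ClosureOfSubAlgebra}) give $\iter^p \subseteq \mL$. If $p$ is odd, the identity $\eta^p = \eta \cdot (\eta^2)^{(p-1)/2}$ combined with parts~(3), (4) and~(5) gives $\eta^p \in \mM$ for every $\eta \in \mM$, so iteration stays in $\mM$ provided $c$ does --- which is automatic when $c = 0$, and, under the hypothesis of subcase~(a), is ensured by $\mM = \mL \ni 1$ via part~(2). In subcase~(b) neither condition holds, but $\mT \subseteq \mS$, closure of $\mS$ under multiplication (part~(1)) and $c \in \mR \subseteq \mS$ yield $\iter^p \subseteq \mS$.

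\textbf{Lower bound in the straightforward cases.} Specialized inputs recover the required basis elements. The input $\zeta = 0$ yields $f_c(0) = c$, contributing $1 \in \iter^p$ whenever $c \neq 0$. Taking $\zeta = x\um{i}$ yields $f_c(x\um{i}) = \epsilon_i^{\lfloor p/2 \rfloor} x^p \um{i}^{p \bmod 2} + c$, producing scalar multiples of $1$ (even $p$) or of $\um{i}$ (odd $p$) after subtracting $c$. For the cross-products $\um{i}\um{j}$ in the even case, $\zeta = x\um{i} + y\um{j}$ gives $\zeta^p = A(x,y) + B(x,y)\um{i}\um{j}$ where the coefficient of $xy^{p-1}$ in $B$ is $p\,\epsilon_j^{(p-2)/2} \neq 0$; hence $B \not\equiv 0$ and a suitable $(x,y)$ extracts $\um{i}\um{j}$ after subtracting the scalar part. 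For $\um{1}\um{2}\um{3}$ in the odd case (under $c = 0$ or subcase~(a)), the $\um{1}\um{2}\um{3}$-coefficient of $\zeta^p$ with $\zeta = x\um{1}+y\um{2}+z\um{3}$ equals $\sum \binom{p}{a,b,d}\epsilon_1^{(a-1)/2}\epsilon_2^{(b-1)/2}\epsilon_3^{(d-1)/2} x^a y^b z^d$, summed over $(a,b,d)$ all odd with $a+b+d=p$; each individual monomial has a nonzero coefficient, so the polynomial is not identically zero and a suitable choice of parameters combined with the already-available $\um{1}, \um{2}, \um{3}$ extracts $\um{1}\um{2}\um{3}$.

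\textbf{Main obstacle.} The hardest part is subcase~(b) of the odd case with $c \neq 0$, where first iterates live in $\mR + \mM$ and can never reach the cross-products $\um{1}\um{2}, \um{1}\um{3}, \um{2}\um{3}$. The plan is to use the second iterate $f_c^2(\zeta) = (c + \zeta^p)^p + c$. Setting $\mu := \zeta^p = \mu_1\um{1} + \mu_2\um{2} + \mu_3\um{3} + \mu_4\um{1}\um{2}\um{3} \in \mM$ and expanding binomially, the even-$k$ terms $\binom{p}{k} c^{p-k}\mu^k$ all lie in $\mL$ (since $\mu^2 \in \mL$ by part~(5) and $\mL$ is closed under multiplication by part~(3)), while the $k = 2$ contribution isolates $\binom{p}{2} c^{p-2} \mu^2$. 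A direct computation shows the $\um{1}\um{2}$-coefficient of $\mu^2$ is $2\mu_1\mu_2 + 2\epsilon_3\mu_3\mu_4$, with symmetric expressions for $\um{1}\um{3}$ and $\um{2}\um{3}$. Specializing $\zeta$ to $z = 0$ kills $\mu_3$ and $\mu_4$ (which arise only from monomials divisible by $z$) while $\mu_1(x,y), \mu_2(x,y)$ remain homogeneous polynomials of degree $p$ with nonzero coefficients on $x^p$ and $y^p$ respectively. The $k=2$ contribution $2\binom{p}{2} c^{p-2} \mu_1 \mu_2$ is then a nontrivial homogeneous polynomial of degree $2p$ in $x, y$, while higher even-$k$ contributions are homogeneous of degree $\geq 4p$ and cannot cancel it; hence the total $\um{1}\um{2}$-coefficient of $f_c^2(\zeta)$ is not identically zero, and after subtracting elements of $\mR + \mM$ already known to lie in $\iter^p$ we extract $\um{1}\um{2}$. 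The symmetric choices $y = 0$ and $x = 0$ yield $\um{1}\um{3}$ and $\um{2}\um{3}$, completing the proof.
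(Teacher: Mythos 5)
Your proposal is correct and follows essentially the same strategy as the paper: the upper bounds come from closure properties of $\mL$, $\mM$, $\mS$ together with an induction on iterates, the lower bounds come from extracting each basis element by specializing $\zeta$, and, exactly as in the paper, the hard case (odd $p$, $c \neq 0$, no relation among the units) requires going to the second iterate $f_c^2$ to reach the cross-products $\um{i}\um{j}$. One place where your argument is actually tighter than the paper's: the paper asserts that the coefficient polynomial $\tilde{D}(t)$ in front of $\um{1}\um{2}$ in $f_c^2(\um{1}+t\um{2})$ has a nonzero value without verifying it is not the zero polynomial, whereas your homogeneity argument (the degree-$2p$ contribution $\binom{p}{2}c^{p-2}\cdot 2\mu_1\mu_2$ cannot be cancelled by the higher-degree pieces of degree $\geq 4p$) explicitly establishes nonvanishing, closing a small gap.
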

\begin{proof}
We first start by showing part (1). Assume that $p$ is even, so that $p = 2k$ for some positive integer $k$, and that $c \in \mR$. We have to show an equality between two sets. 

We start by showing the inclusion $\mathrm{It}^p (\um{1} , \um{2} , \um{3}) \subseteq \mL (\um{1} , \um{2} , \um{3})$. For the rest of this part, let $\zeta \in \mT(\um{1},\um{2},\um{3})$ with $\zeta = A \um{1} + B \um{2} + C \um{3}$ where $A, B, C \in \mR$. We will prove using induction that $f_c^m (\zeta ) \in \mL (\um{1} , \um{2} , \um{3} )$ for any integer $m \geq 1$. The first iterate is $f^1_c(\zeta) = \zeta^{2k} + c$. We will prove with an induction on $k$ that $\zeta^{2k} \in \mL (\um{1} , \um{2} , \um{3} )$. Starting with $k=1$, we have 
    $$
        \zeta^{2} = A^2\ums{1} + B^2 \ums{2} +  C^2 \ums{3} + 2AB \um{1}\um{2} + 2AC \um{1} \um{3} + 2BC\um{2}\um{3} .
    $$
Since $\ums{1} = \pm 1$, $\ums{2} = \pm 1$, and $\ums{3} = \pm 1$, we deduce that $\zeta^2 \in \mL(\um{1},\um{2},\um{3})$. Now assume for some $k \in \mN$, we have $\zeta^{2k} \in \mL (\um{1} , \um{2} , \um{3} )$. Then, we have
    $$
        \zeta^{2(k+1)} = \zeta^2\zeta^{2k} .
    $$
From Theorem \ref{Thm:ClosureOfSubAlgebra} part (\ref{it:LclosedUnderMultiplication}), the space $\mL (\um{1} , \um{2} , \um{3} )$ is closed under multicomplex multiplication. We know that $\zeta^2 \in \mL (\um{1} , \um{2} , \um{3})$ and, from the induction hypothesis, we have $\zeta^{2k} \in \mL (\um{1} , \um{2} , \um{3})$. Therefore $\zeta^2 \zeta^{2k} \in \mL (\um{1} , \um{2} , \um{3})$ and hence $\zeta^{2(k + 1)} \in \mL (\um{1} , \um{2} , \um{3} )$. Adding the number $c$, we have just shown that $f_c^1 (\zeta ) \in \mL (\um{1} , \um{2} , \um{3} )$. 

Now assume that $f_c^m (\zeta ) \in \mL (\um{1} , \um{2} , \um{3})$ for some integer $m \geq 1$. We want to show that $f_c^{m + 1} (\zeta ) \in \mL (\um{1} , \um{2} , \um{3} )$. The expression of the $(m + 1)$-th iterate is
    $$
        f_c^{m + 1} (\zeta ) = (f_c^m (\zeta ))^{2k} + c .
    $$
From Theorem \ref{Thm:ClosureOfSubAlgebra} part (\ref{it:LclosedUnderMultiplication}), the space $\mL (\um{1} , \um{2} , \um{3})$ is closed under multicomplex multiplication. Hence, we must have that $(f_c^m (\zeta ))^{2k} \in \mL (\um{1} , \um{2} , \um{3})$ because $f_c^m (\zeta) \in \mL (\um{1} , \um{2} , \um{3})$ from the induction hypothesis. Hence, $f_c^{m + 1} (\zeta ) \in \mL (\um{1} , \um{2} , \um{3})$. This concludes the induction on $m$ and the proof of $\mathrm{It}^p (\um{1} , \um{2} , \um{3}) \subseteq \mL (\um{1} , \um{2} , \um{3})$. 

We now want to prove the other direction, that is $\mL (\um{1} , \um{2} , \um{3}) \subset \mathrm{It}^p (\um{1} , \um{2} , \um{3})$. To do that, we will show that each basis elements ($1$, $\um{1}\um{2}$, $\um{1}\um{3}$, $\um{2}\um{3}$) is a linear combination of some iterates. Setting $\zeta = \um{1}$ implies that
    $$
        f_c^1 (\um{1}) = (\um{1})^{2k} + c = (\ums{1})^k  + c = a \cdot 1 ,
    $$
where $a = (\ums{1})^k + c$. Hence, $1 = f_c^1 (\um{1}) / a$. To show that the other basis elements are a linear combination of some iterates, we will use a similar trick from Brouillette and Rochon's paper (see the proof of \cite[Lemma 3]{Brouillette_2019}). For $l, h \in \{ 1, 2, 3 \}$ with $l \leq h$ and $t \in \mR \backslash \{ 0 \}$, we have
    $$
        (t\um{l} + \um{h})^{2k} = \sum_{s = 0}^{2k} \binom{2k}{s} t^s(\um{l})^s (\um{h})^{2k - s} 
    $$
and splitting the sum over the $s$ that are even and the $s$ that are odd, we get
    $$
        (t\um{l} + \um{h})^{2k} = \sum_{s = 0}^k \binom{2k}{2s}t^{2s} (\ums{l})^s (\ums{h})^{k - s} + \Big( \sum_{s = 0}^{k - 1} \binom{2k}{2s + 1} t^{2s + 1} (\ums{l})^s (\ums{h})^{k-s} \Big) \um{l} \um{h} .
    $$
The last equation can be rewritten as $(t \um{l} + \um{h})^{2k} = A (t) + B (t) \um{l}\um{h}$, where $A(t)$ and $B(t)$ are real-valued polynomials of degree $2k$ and $2k - 1$ respectively. Since a polynomial has finitely many roots, we may choose a number $t_0$ (that depends on $l$ and $h$) such that $B(t_0) \neq 0$. We then obtain
    $$
        f_c^1 (t_0 \um{l} + \um{h}) = A(t_0) + c + B(t_0) \um{l}\um{h}
    $$
and using the fact that $1 = f_c^1 (\um{1}) / a$, we can rewrite the last expression as followed
    $$
        \um{l}\um{h} = \frac{f_c^1 (t_0 \um{l} + \um{h})}{B (t_0)}- \Big( \frac{A(t_0) + c}{aB(t_0)} \Big) f_c^1 (\um{1})
    $$
Therefore, we obtain $\mL (\um{1} , \um{2} , \um{3}) \subseteq \mathrm{It}^p (\um{1} , \um{2} , \um{3})$, this finishes the proof of part (1). 

Now we show part (2). Our goal is to show that for $p$ odd and $c = 0$, we have $\mathrm{It}^p(\um{1},\um{2},\um{3}) = \mM(\um{1},\um{2},\um{3})$. We first show that $\mathrm{It}^p(\um{1},\um{2},\um{3}) \subseteq\mM(\um{1},\um{2},\um{3})$. Consider $\zeta \in \mT(\um{1},\um{2},\um{3})$ and $p = 2k+1$ for $k \in \mN$. We will prove by induction on $m$ that $f_c^m (\zeta ) \in \mM (\um{1} , \um{2} , \um{3} )$. For $m= 1$, we have
    $$
        f_c^1 (\zeta ) = \zeta^{2k + 1} .
    $$
From part (1), we know that $\zeta^{2k} \in \mL (\um{1} , \um{2} , \um{3})$ for any $k \geq 1$. Since $\zeta \in \mT (\um{1} , \um{2} , \um{3}) \subseteq \mM (\um{1} , \um{2} , \um{3} )$, it follows from Theorem \ref{Thm:ClosureOfSubAlgebra} part \eqref{it:M-absorb-L} that 
    $$
        \zeta^{2k + 1} = \zeta \zeta^{2k} \in \mM (\um{1} , \um{2} , \um{3}) .
    $$
Hence, $f_c^1 (\zeta ) \in \mM (\um{1} , \um{2} , \um{3} )$. Now assume that $f_c^m (\zeta ) \in \mM (\um{1} , \um{2} , \um{3})$ for some positive integer $m \geq 1$. Then we have
    \begin{align*}
        f_c^{m + 1} (\zeta ) = (f_c^m (\zeta ))^{2k + 1} &= (f_c^m (\zeta ))^{2k} (f_c^m (\zeta ) ) = ( (f_c^m (\zeta ))^{2})^k (f_c^m (\zeta )) .
    \end{align*}
Using Theorem \ref{Thm:ClosureOfSubAlgebra} part \eqref{it:M-times-M-equals-L}, we have $(f_c^m (\zeta ))^2 \in \mL (\um{1} , \um{2} , \um{3})$ since, from the induction hypothesis, $f_c^m (\zeta ) \in \mM (\um{1} , \um{2} , \um{3})$. From Theorem \ref{Thm:ClosureOfSubAlgebra} part \eqref{it:LclosedUnderMultiplication}, the set $\mL (\um{1} , \um{2} , \um{3})$ is closed under multicomplex multiplication and therefore $((f_c^m (\zeta ))^2)^k \in \mL (\um{1} , \um{2} , \um{3})$. Now, using Theorem \ref{Thm:ClosureOfSubAlgebra} part \eqref{it:M-absorb-L}, we obtain that $( (f_c^m (\zeta))^2)^k (f_c^m (\zeta )) \in \mM (\um{1} , \um{2} , \um{3})$ because $f_c^m (\zeta ) \in \mM (\um{1} , \um{2} , \um{3})$. Hence, we obtain $f_c^{m + 1} (\zeta ) \in \mM (\um{1} , \um{2} , \um{3})$. Hence, by the Principle of Mathematical Induction, the proof of the inclusion $\mathrm{It}^p (\um{1} , \um{2} , \um{3}) \subseteq \mM (\um{1} , \um{2} , \um{3} )$ is completed. 

We now want to show that $\mM (\um{1} , \um{2} , \um{3}) \subseteq \mathrm{It}^p (\um{1} , \um{2} , \um{3})$. We will do this by expressing each basis element $\um{1}$, $\um{2}$, $\um{3}$, and $\um{1}\um{2}\um{3}$ as a linear combination of iterates. Let $\zeta = \um{l}$ with $l = 1, 2, 3$, then we have
    $$
        f_c^1 (\um{l}) = (\um{l})^{2k + 1} = (\ums{l})^k \um{l} .
    $$
Hence, $\um{l} = f_c^1 (\um{l})/ a_l$, where $a = (\ums{l})^k$. To express the other element $\um{1} \um{2} \um{3}$ as a linear combination of iterates, we will adapt the trick used earlier that comes from Brouillette and Rochon's paper (see the proof of \cite[Lemma 3]{Brouillette_2019}). Consider $\zeta=t\um{1}+\um{2}+\um{3}$ with $t \in \mR \backslash \{ 0 \}$. Then we have
$$
    f_c^1 (\zeta ) = \sum_{s = 0}^{2k + 1} \sum_{r = 0}^{2k + 1 - s} \binom{2k + 1}{s} \binom{2k + 1 - s}{r} t^s (\um{1})^s (\um{2})^{r} (\um{3})^{2k + 1 - s - r} 
$$
Splitting the summation over the different parities of the integers $s$ and $r$, we obtain
    \begin{align*}
        f_c^1 (\zeta ) &= \Big( \sum_{s = 0}^k \sum_{r =0}^{r - s} \binom{2k + 1}{2s} \binom{2k - 2s + 1}{2r} t^{2s} (\ums{1})^s (\ums{2})^r (\ums{3})^{k - s - r} \Big) \um{3} \\ 
        & \phantom{=} + \Big( \sum_{s = 0}^k \sum_{r =0}^{r - s} \binom{2k + 1}{2s} \binom{2k - 2s + 1}{2r + 1} t^{2s} (\ums{1})^s (\ums{2})^r (\ums{3})^{k - s - r} \Big) \um{2} \\ 
        & \phantom{=} + \Big( \sum_{s = 0}^k \sum_{r =0}^{r - s} \binom{2k + 1}{2s+1} \binom{2k - 2s + 1}{2r} t^{2s + 1} (\ums{1})^s (\ums{2})^r (\ums{3})^{k - s - r} \Big) \um{1} \\ 
        & \phantom{=} + \Big( \sum_{s = 0}^k \sum_{r =0}^{r - s} \binom{2k + 1}{2s + 1} \binom{2k - 2s + 1}{2r + 1} t^{2s + 1} (\ums{1})^s (\ums{2})^r (\ums{3})^{k - s - r + 1} \Big) \um{1}\um{2}\um{3} 
    \end{align*}
which can be rewritten as
    $$
        f_c^1 (\zeta ) = A (t) \um{1} + B(t) \um{2} + C(t) \um{3} + D(t) \um{1} \um{2} \um{3} 
    $$
where $A(t)$, $B(t)$, $C(t)$, and $D(t)$ are polynomials in $t$. Since polynomials have only finitely many roots, we can choose a value $t_0 \in \mR$ such that $D (t_0 ) \neq 0$. Therefore, we obtain
    $$  
        \um{1}\um{2}\um{3} = \frac{1}{D (t_0)} f^1_c(\zeta) - \frac{A (t_0)}{D (t_0) a_1} f_c^1 (\um{1}) - \frac{B(t_0)}{D (t_0) a_2} f_c^1 (\um{2}) - \frac{C(t_0)}{D (t_0) a_3} f_c^1 (\um{3}) .
    $$
This shows that every basis element can be rewritten as a linear combination of iterates and therefore $\mM (\um{1}, \um{2}, \um{3} ) \subseteq \mathrm{It}^p (\um{1} , \um{2}, \um{3} )$.

Finally, we show part (3). Assume that $p$ is odd, but $c \neq 0$. The proof of part (a) is similar to the proof of parts (1) and (2). We will therefore focus on the proof of part (b). We want to show that $\mathrm{It}^p (\um{1}, \um{2}, \um{3}) = \mS (\um{1}, \um{2}, \um{3})$. 

We start by showing that $\mathrm{It}^p (\um{1}, \um{2}, \um{3}) \subseteq \mS (\um{1}, \um{2}, \um{3})$. Let $\zeta \in \mT (\um{1}, \um{2}, \um{3})$. We will do an induction on $m$ in $f_c^m (\zeta )$. Let $m = 1$ so that $f_c^1 (\zeta ) = \zeta^{2k + 1} + c$. From the proof of the base case in part (2), we know that $\zeta^{2k + 1} \in \mM (\um{1}, \um{2}, \um{3})$. This implies that $f_c^{1} (\zeta ) = \zeta^{2k + 1} + c \in \mS (\um{1}, \um{2}, \um{3})$. Now assume that the claim is true for some integer $m > 1$, which means that $f_c^m (\zeta ) \in \mS (\um{1}, \um{2}, \um{3})$. From the definition of the iterates, we have $f_c^{m + 1} (\zeta ) = (f_c^m (\zeta ))^{2k + 1} + c$. From the Induction Hypothesis, we have $f_c^m (\zeta ) \in \mS (\um{1}, \um{2}, \um{3})$. From Theorem \ref{Thm:ClosureOfSubAlgebra} part (\ref{it:ClosureofS}), the subspace $\mS (\um{1}, \um{2}, \um{3})$ is closed under multiplication and we then get $(f_c^m (\zeta ))^{2k + 1} \in \mS (\um{1}, \um{2}, \um{3})$. Hence, $f_c^{m + 1} (\zeta ) \in \mS (\um{1}, \um{2}, \um{3})$ and we conclude that $\mathrm{It}^p (\um{1}, \um{2}, \um{3}) \subseteq \mS (\um{1}, \um{2}, \um{3})$.

We now show the reverse inclusion by showing that each basis element of $\mS (\um{1}, \um{2}, \um{3})$ can be written down as a linear combination of some iterates. Notice that $f_c^1(0) = c$ and since $c \neq 0$, we have $1 = f_c^1 (0) / c$. Let $\zeta = \um{l}$ for $l = 1, 2, 3$ so that
    $$  
        f_c^1 (\um{l}) = (\ums{l})^k \um{l} + c 
    $$
and hence $\um{l} = \frac{f_c^1 (\um{l})}{a_l} - \frac{f_c^1 (0)}{a_l}$ with $a_l = (\ums{l})^k$. Let $\zeta = t_0 \um{1} + \um{2} + \um{3}$ so that, from the calculations performed at the end of the proof of part (2), we get
    $$
         \um{1}\um{2}\um{3} = \frac{1}{D (t_0)} f^1_c(\zeta) - f_c^1 (0) - \frac{A (t_0)}{D (t_0) a_1} f_c^1 (\um{1}) - \frac{B(t_0)}{D (t_0) a_2} f_c^1 (\um{2}) - \frac{C(t_0)}{D (t_0) a_3} f_c^1 (\um{3}) .
    $$
with $D_k \neq 0$. To find the linear combination to express the other basis elements, namely, $\um{1}\um{2}$, $\um{1}\um{3}$, and $\um{2}\um{3}$, we adapt for a second time the trick from Brouillette and Rochon's paper (see the proof of \cite[Lemma 3]{Brouillette_2019}). Let $t \in \mR$ with $t \neq 0$ and define $\zeta = \um{1} + t \um{2}$. Then, using the binomial theorem, we get
    $$  
        f_c^1 (\zeta ) = c + \sum_{l = 0}^{2k+1} \binom{2k+1}{l} (\um{1})^l (t\um{2})^{p - l} = c + A(t) \um{1} + B(t) \um{2}
    $$
where $A (t) = \sum_{l = 0}^{k} \binom{2k + 1}{2l + 1}(\ums{1})^l (\ums{2})^{k-l} (t^2)^{k-l}$ and $B(t) = t\sum_{l = 0}^{k} \binom{2k + 1}{2l}(\ums{1})^l (\ums{2})^{k-l} (t^2)^{k-l}$ are polynomials in $t$. Using this expression for $f_c^1 (\zeta )$, we can now write
    \begin{align*}
        f_c^2 (\zeta ) &= \Big( c + A(t) \um{1} + B(t) \um{2} \Big)^{2k + 1} + c  \\ 
        &= \sum_{l = 0}^{2k + 1} \binom{2k + 1}{l} c^l \big( A(t) \um{1} + B(t) \um{2} \big)^{2k + 1 - l} + c \\ 
        &= \sum_{l = 0}^{2k + 1} \sum_{h = 0}^{2k + 1 - l} \binom{2k + 1}{l} \binom{2k + 1 - l}{h} c^l (A(t) \um{1})^{h} (B(t) \um{2})^{2k + 1 - l - h} + c.
    \end{align*}
Splitting the sums over the parity of $l$ and $h$, we get
    \begin{align*}
        f_c^2 (\zeta ) = \Big( \sum_{l = 0}^k \sum_{h = 0}^{k - l} \binom{2k + 1}{2l} \binom{2 (k - l) + 1}{2h} c^{2l} A(t)^{2h} B(t)^{2 (k - l - h) + 1} (\ums{1})^{h} (\ums{2})^{k - l - h} \Big) \um{2} \\ 
         \phantom{=} \, + \Big(  \sum_{l = 0}^k \sum_{h = 0}^{k - l}  \binom{2k + 1}{2l} \binom{2 (k - l) + 1}{2h + 1} c^{2l} A(t)^{2h + 1} B(t)^{2 (k - l - h)} (\ums{1})^{h} (\ums{2})^{k - l - h} \Big) \um{1} \\ 
         \allowdisplaybreaks
         \phantom{=} \, + \Big(  \sum_{l = 0}^k \sum_{h = 0}^{k - l}  \binom{2k + 1}{2l + 1} \binom{2 (k - l)}{2h} c^{2l + 1} A(t)^{2h} B(t)^{2 (k - l - h)} (\ums{1})^{h} (\ums{2})^{k - l - h} \Big) + c \\ 
         \phantom{=} \, + \Big(  \sum_{l = 0}^k \sum_{h = 0}^{k - l -1}  \binom{2k + 1}{2l + 1} \binom{2 (k - l)}{2h + 1} c^{2l + 1} A(t)^{2h + 1} B(t)^{2 (k - l - h) - 1} (\ums{1})^{h} (\ums{2})^{k - l - h - 1} \Big) \um{1} \um{2}
    \end{align*}
which can be rewritten as
    $$  
        f_c^2 (\zeta ) = \tilde{A}(t) + \tilde{B} (t) \um{1} + \tilde{C}(t) \um{2} + \tilde{D}(t) \um{1}\um{2}
    $$
where $\tilde{A} (t)$, $\tilde{B} (t)$, $\tilde{C} (t)$, and $\tilde{D} (t)$ are polynomials in the variable $t$. Since any polynomial has finitely many roots, we can choose a value of $t_0 \in \mR$ ($t_0 \neq 0$) that avoids the roots of $\tilde{D} (t)$ and therefore $\tilde{D} (t_0) \neq 0$. Letting $t = t_0$, we get
    $$  
        f_c^2 (\zeta ) = \tilde{A} (t_0) \frac{f_c^1 (0)}{c} + \tilde{B} (t_0) \Big( \frac{f_c^1 (\um{1})}{a_1} - \frac{f_c^1 (0)}{a_1} \Big) + \tilde{C} (t_0) \Big( \frac{f_c^1 (\um{2})}{a_2} - \frac{f_c^1 (0)}{a_2} \Big) + \tilde{D} (t) \um{1}\um{2}
    $$
and after isolating $\um{1}\um{2}$, we get
    $$  
        \um{1}\um{2} = \frac{f_c^2 (\zeta )}{\tilde{D} (t_0)} + \Big( \frac{\frac{\tilde{B} (t_0)}{a_1} + \frac{\tilde{C} (t_0)}{a_2} - \frac{\tilde{A} (t_0) }{c}}{\tilde{D} (t_0)} \Big) f_c^1 (0) - \frac{\tilde{B} (t_0)}{a_1 \tilde{D} (t_0)} f_c^1 (\um{1}) - \frac{\tilde{C} (t_0)}{a_2 \tilde{D} (t_0)} f_c^1 (\um{2}) .
    $$
Similarly, we can write $\um{1}\um{3}$ and $\um{2} \um{3}$ as a linear combinations of iterates. This shows the inclusion $\mS (\um{1} , \um{2}, \um{3}) \subseteq \mathrm{It}^p (\um{1}, \um{2}, \um{3})$ and completes the proof of the theorem. \qed
\end{proof}

\section{Characterization of 3D slices}\label{Sec:Characterization3DSlices}
The statement of Theorem \ref{Thm:CharacterizationIterates} indicates that a space of iterates behave differently depending on the parity of the integer $p$ and on the nature of the principal units used. We will therefore start by proving our main result on the classification of principal 3D slices when $p$ is an even integer and then prove the result when $p$ is an odd integer.

\subsection{Main result for even powers}
Before proving our first main result, we make several assumptions on the principal units. It is rather easy to see that $\cT^p (\um{1}, \um{2}, \um{3}) \sim \cT^p (\sigma (\um{1}) , \sigma(\um{2}) , \sigma (\um{3}))$ where $\sigma$ is a permutation of the symbols $\um{1}$, $\um{2}$, $\um{3}$. The order of the units is therefore not important and we can choose a specific ordering. 

We choose the following ordering for a triplet of principal units $\um{1}$, $\um{2}$, $\um{3}$: $1$ has priority for the first position over imaginary units and hyperbolic units; imaginary units have priority for the second position over hyperbolic units. When all of the principal units are of the same nature, we then put them in increasing index. 

For instance, the 3D slices $\cT^p (1 , \im{1}, \jm{3})$, $\cT^p (1 ,  \jm{3}, \im{1})$, $\cT^p ( \im{1}, 1 , \jm{3})$, and $\cT^p ( \im{1}, \jm{3}, 1)$, $\cT^p (\jm{3}, 1 , \im{1})$, $\cT^p (\jm{3}, \im{1}, 1 )$ are all equivalent to $\cT^p (1 , \im{1}, \jm{3})$. Another example is the group of 3D slices $\cT^p (\im{1}, \im{2}, \im{3})$, $\cT^p (\im{1}, \im{3}, \im{2})$, $\cT^p (\im{2}, \im{1}, \im{3})$, $\cT^p (\im{2}, \im{3}, \im{1})$, $\cT^p (\im{3}, \im{1}, \im{2})$, and $\cT^p (\im{3}, \im{2}, \im{1})$. They are all equivalent to $\cT^p (\im{1}, \im{2}, \im{3})$. Without loss of generality, we will therefore assume that the principal units follow this choice of ordering.

Our first result concerning the characterization of the 3D slices of the filled-in Julia sets goes as followed. This is part 1 of Theorem \ref{Thm:main}

\begin{theorem}\label{Thm:Characterization3DSlicesEven}
    Let $n \geq 3$ be an integer and $c \in \mR$. If $p$ is an even positive integer, then there are $4$ principal 3D slices.
\end{theorem}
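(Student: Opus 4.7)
The plan is to use Theorem~\ref{Thm:CharacterizationIterates}(\ref{Thm:CharacterizationIteratesEvenCase}), which for $p$ even identifies $\mathrm{It}^p (\um{1}, \um{2}, \um{3})$ with the four-dimensional $\mR$-algebra $\mL(\um{1}, \um{2}, \um{3}) = \spn_\mR \{1, \um{1}\um{2}, \um{1}\um{3}, \um{2}\um{3}\}$. Using the canonical ordering of principal units introduced before the theorem, there are seven representative triples, which I would group by the multiset $\{\ums{1}, \ums{2}, \ums{3}\} \subset \{-1, +1\}$ -- equivalently, by the number of imaginary units among $\um{1}, \um{2}, \um{3}$. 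There are exactly four such multisets (corresponding to $0$, $1$, $2$, or $3$ imaginary units), so it suffices to show that any two canonical triples with matching multisets are equivalent, and that no two canonical triples with differing multisets are.

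For the upper bound, I would construct an explicit equivalence within each multiset class. Given canonical triples $(\um{})$ and $(\vm{})$ with the same sign multiset, some permutation $\sigma$ of the indices aligns the sign vectors pointwise, and $\cT^p(\um{1},\um{2},\um{3}) \sim \cT^p(\sigma(\um{1}),\sigma(\um{2}),\sigma(\um{3}))$ by the reordering remark preceding the theorem. The natural linear bijection $\varphi: \mL(\sigma(\um{})) \to \mL(\vm{})$ sending $1 \mapsto 1$ and $\sigma(\um{i})\sigma(\um{j}) \mapsto \vm{i}\vm{j}$ is an $\mR$-algebra isomorphism: by Theorem~\ref{Thm:ClosureOfSubAlgebra}(\ref{it:LclosedUnderMultiplication}), the multiplication table of $\mL$ in this basis is determined solely by the signs $(\ums{1}, \ums{2}, \ums{3})$, which now coincide with $(\vms{1}, \vms{2}, \vms{3})$. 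Being an algebra isomorphism, $\varphi$ satisfies Condition (\ref{Eq:ConditionEquivalent3DSlices}) automatically, and expanding $(x\um{1} + y\um{2} + z\um{3})^p + c$ in the $\mL$-basis verifies Condition (\ref{Eq:ConditionEquivalent3DSlicesFirstIterates}).

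For the lower bound, I would show that any valid $\varphi$ preserves the sign multiset. Setting $(x, y, z) = (0, 0, 0)$ in Condition (\ref{Eq:ConditionEquivalent3DSlicesFirstIterates}) yields $\varphi(c) = c$, so $\varphi(1) = 1$ when $c \neq 0$. For $c = 0$, Condition (\ref{Eq:ConditionEquivalent3DSlices}) together with the identity $\zeta^p = (\zeta^2)^{p/2}$ allows polarization of $\varphi(\eta^2) = \varphi(\eta)^2$ into a ring-homomorphism identity on $\mL$, and bijectivity between unital $\mR$-algebras then forces $\varphi(1) = 1$. With $\varphi(1) = 1$ fixed, matching the $\mL$-coordinates in Condition (\ref{Eq:ConditionEquivalent3DSlicesFirstIterates}) -- in particular the coefficient of $1$, which for $p = 2$ equals $x^2 \ums{1} + y^2 \ums{2} + z^2 \ums{3} + c$, and for general even $p$ is a polynomial in $x^2, y^2, z^2$ whose lowest-degree nontrivial terms already separate the signs -- pins down each $\ums{i}$ against the corresponding $\vms{i}$. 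The sign multiset is thus an invariant, and counting the four multisets gives exactly four classes.

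The main obstacle will be guaranteeing $\varphi(1) = 1$ in the borderline case $c = 0$: the polarization argument is transparent for $p = 2$, and for higher even $p$ one must first show that $\varphi$ commutes with squaring on $\mL$ (using that $\zeta^p = (\zeta^2)^{p/2}$ stays in the subring $\mL$ by Theorem~\ref{Thm:ClosureOfSubAlgebra}(\ref{it:LclosedUnderMultiplication})), and then polarize as in the $p=2$ case to obtain multiplicativity before invoking bijectivity to conclude $\varphi(1) = 1$.
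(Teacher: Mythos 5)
Your upper-bound argument (any two triples with matching sign vectors yield equivalent slices) is essentially the paper's own: identify $\mathrm{It}^p(\um{1},\um{2},\um{3})$ with $\mL(\um{1},\um{2},\um{3})$ via Theorem~\ref{Thm:CharacterizationIterates}(\ref{Thm:CharacterizationIteratesEvenCase}), define the basis-to-basis linear map $\varphi$, check that it is multiplicative on $\mL$ so that condition~(\ref{Eq:ConditionEquivalent3DSlices}) holds, verify condition~(\ref{Eq:ConditionEquivalent3DSlicesFirstIterates}) separately by expanding the first iterate of an element of $\mT(\um{1},\um{2},\um{3})$, and then reduce the count to the four sign multisets. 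This matches the paper point for point.

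Where you diverge is in trying to prove the lower bound --- that the four sign multisets give pairwise non-equivalent slices. The paper does not address this at all: its proof shows that each sign class collapses to a representative and then simply declares there are four, leaving distinctness to the reader (or the figures). Your attempt is a genuine addition, but as you yourself flag, the $c=0$, $p>2$ case has a real gap. Condition~(\ref{Eq:ConditionEquivalent3DSlices}) only tells you that $\varphi(\zeta^{p}) = \varphi(\zeta)^{p}$ on $\mL$; it does not by itself yield $\varphi(\zeta^{2}) = \varphi(\zeta)^{2}$, and the proposed route via $\zeta^{p} = (\zeta^{2})^{p/2}$ is circular --- it already presupposes that $\varphi$ commutes with squaring in order to peel off the exponent. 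To close this you would need an actual mechanism for extracting the quadratic identity from the $p$-th power identity (e.g., a polarization over several variables, or a direct algebraic argument exploiting the structure of $\mL$ as a product of copies of $\mR$ and $\mC$), not just the observation that $\zeta^{2}$ stays in $\mL$. Until that step is supplied, the lower-bound half of your proof is incomplete for $c = 0$, $p > 2$; the upper-bound half is sound and coincides with the published argument.
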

\begin{proof}
    Let $\um{1}, \um{2}, \um{3} \in \mI (n)$ and choose $\vm{1}, \vm{2}, \vm{3} \in \mI (n)$ such that $\ums{1} = \vms{1}$, $\ums{2} = \vms{2}$, and $\ums{3} = \vms{3}$. Such units always exist if $n \geq 3$. We will show that $\cT^p (\um{1}, \um{2}, \um{3}) \sim \cT^p (\vm{1}, \vm{2}, \vm{3})$.
    
    When $p$ is even, from Theorem \ref{Thm:CharacterizationIterates} part \eqref{Thm:CharacterizationIteratesEvenCase}, we have $\mathrm{It}^p (\um{1}, \um{2}, \um{3}) = \mL (\um{1}, \um{2}, \um{3})$ and $\mathrm{It}^p (\vm{1}, \vm{2}, \vm{3}) = \mL (\vm{1}, \vm{2}, \vm{3})$. So define the linear map $\varphi : \mathrm{It}^p (\um{1}, \um{2}, \um{3}) \ra \mathrm{It}^p (\vm{1}, \vm{2}, \vm{3})$ on the basis elements and extend it linearly: $\varphi (1) = 1$,  $\varphi (\um{1}\um{2}) = \vm{1} \vm{2}$, $\varphi (\um{1} \um{3}) = \vm{1} \vm{3}$, and $\varphi (\um{2} \um{3}) = \vm{2} \vm{3}$. Clearly $\varphi$ is a bijective linear map. 
    
     A way to show that $f_c (\varphi (\zeta )) = \varphi (f_c (\zeta))$ is to show that $\varphi (\zeta^p) = (\varphi (\zeta ))^p$. In fact, we can show that $\varphi (\zeta \eta) = \varphi (\zeta ) \varphi (\eta )$ for any $\zeta , \eta \in \mL (\um{1}, \um{2}, \um{3})$.  Let $\zeta = x + y \um{1} \um{2} + z \um{1} \um{3} + w \um{2} \um{3}$ and $\eta = X + Y \um{1}\um{2} + Z \um{1} \um{3} + W \um{2} \um{3}$. Then, we have
        \begin{align*}
            \zeta \eta &= xX + yY \ums{1} \ums{2} + zZ \ums{1} \ums{3} + wW \ums{2} \ums{3} + (xY + yX + zW \ums{3} + wZ \ums{3}) \um{1} \um{2} \\ 
            & \phantom{=} \, + (xZ + zX  + yW \ums{2} + wY \ums{2}) \um{1} \um{3} + (xW + wX + yZ \ums{1} + zY \ums{1}) \um{2} \um{3} .
        \end{align*}
    By linearity of $\varphi$ and the assumptions on the squares of each principal units, we get
         \begin{align*}
            \varphi (\zeta \eta) &= xX + yY \vms{1} \vms{2} + zZ \vms{1} \vms{3} + wW \vms{2} \vms{3} + (xY + yX + zW \vms{3} + wZ \vms{3}) \vm{1} \vm{2} \\ 
            & \phantom{=} \, + (xZ + zX  + yW \vms{2} + wY \vms{2}) \vm{1} \vm{3} + (xW + wX + yZ \vms{1} + zY \vms{1}) \vm{2} \vm{3} \\ 
            &= \varphi (\zeta ) \varphi (\eta ) .
        \end{align*}
    An induction argument on $p = 2k$ then shows that $\varphi (\zeta^p) = (\varphi (\zeta ))^p$. Using this property, we get
        \begin{align*}  
            \varphi (f_c (\zeta )) = \varphi (\zeta^p + c) = \varphi (\zeta^p) + \varphi (c) = (\varphi (\zeta ))^p + c = f_c (\varphi (\zeta )) .
        \end{align*}
    Hence, condition (2) in Definition \ref{Def:EquivalentRelation3Dslices} is satisfied. 
    
    It remains to show that condition (1) in Definition \ref{Def:EquivalentRelation3Dslices} is satisfied, since $\mT (\um{1}, \um{2}, \um{3}) \not\subset \mathrm{It}^p (\um{1}, \um{2}, \um{3})$ and $\mT (\vm{1}, \vm{2}, \vm{3}) \not\subset \mathrm{It}^p (\vm{1}, \vm{2}, \vm{3})$. For $\eta = x \um{1} + y \um{2} + z \um{3}$ for $x, y, z \in \mR$ and $\zeta = X \um{1} + Y \um{2} + Z \um{3}$ for $X, Y, Z \in \mR$, we calculate
        \begin{align*}
            \eta \zeta &= xX \ums{1} + yY \ums{2} + zZ \ums{3} + (xY + yX) \um{1} \um{2} + (xZ + zX) \um{1} \um{3} + (yZ + zY) \um{2} \um{3} .
        \end{align*}
    Applying $\varphi$ to the last equality and using linearity, we obtain 
        $$
             \varphi (\eta \zeta ) = xX \ums{1} + yY \ums{2} + zZ \ums{3} + (xY + yX) \vm{1} \vm{2} + (xZ + zX) \vm{1} \vm{3} + (yZ + zY) \vm{2} \vm{3}
        $$
    and since $\ums{1} = \vms{1}$, $\ums{2} = \vms{2}$ and $\ums{3} = \vms{3}$, we conclude that
        $$
            \varphi (\eta \zeta ) = (x \vm{1} + y \vm{2} + z \vm{3}) (X \vm{1} + Y \vm{2} + Z \vm{3}) .
        $$
    An induction argument on $p = 2k$ then implies that $\varphi (\zeta^p) = (x \vm{1} + y \vm{2} + z \vm{3})^p$ which means that 
        $$ 
            \varphi (f_c (x \um{1} + y \um{2} + z \um{3} )) = f_c (x \vm{1} + y \vm{2} + z \vm{3})
        $$ 
    for arbitrary $x, y, z \in \mR$. Hence, condition (2) of Definition \ref{Def:EquivalentRelation3Dslices} is satisfied and the two 3D slices are equivalent.

    We therefore only need to study the sign of the squares of the principal units. There are $2$ choices for each principal units, that is $+1$ and $-1$. Since the order of the selection of the principal units $\um{1}, \um{2}, \um{3}$ is not important, this gives us $4$ choices:
        \begin{enumerate}
            \item $\ums{1} = \ums{2} = \ums{3} = 1$. In this case, all principal 3D slices are equivalent to $\cT^p (1, \jm{1}, \jm{2})$.
            \item $\ums{1} = \ums{3} = 1$ and $\ums{2} = -1$. In this case, all principal 3D slices are equivalent to $\cT^p (1 , \im{1}, \jm{1})$.
            \item $\ums{1} = 1$ and $\ums{2} = \ums{3} = -1$. In this case, all principal 3D slices are equivalent to $\cT^p (1, \im{1}, \im{2})$.
            \item $\ums{1} = \ums{2} = \ums{3} = -1$. In this case, all principal 3D slices are equivalent to $\cT^p (\im{1}, \im{2}, \im{3})$. 
        \end{enumerate}
    This ends the proof. \qed
\end{proof}
We notice that all principal 3D slices are obtained from units in the set of tricomplex numbers. We therefore obtain the following consequence of our result.

\begin{corollary}
    Let $n \geq 3$ et $p \geq 2$ be integers and $c \in \mR$. For any principal 3D slice $\cT^p$ of $\cK_{n, c}^p$, there exists a principal 3D slice $\widetilde{\cT}^p$ of $\cK_{3, c}^p$ such that $\widetilde{\cT}^p \sim \cT^p$.
\end{corollary}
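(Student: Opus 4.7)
The plan is to deduce this corollary directly from Theorem \ref{Thm:Characterization3DSlicesEven}, since it is placed as a consequence of that result. What that theorem actually proves is stronger than a bare count: it exhibits four canonical representatives for the equivalence classes of principal 3D slices when $p$ is even, namely $\cT^p(1,\jm{1},\jm{2})$, $\cT^p(1,\im{1},\jm{1})$, $\cT^p(1,\im{1},\im{2})$, and $\cT^p(\im{1},\im{2},\im{3})$. The decisive observation to extract is that every unit appearing in these four representatives, that is $1$, $\im{1}$, $\im{2}$, $\im{3}$, $\jm{1} = \im{1}\im{2}$, and $\jm{2} = \im{1}\im{3}$, is already an element of $\mI(3)$. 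Consequently, each of the four canonical slices is itself a principal 3D slice of the tricomplex filled-in Julia set $\cK_{3,c}^p$.

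With this in hand, the proof would proceed in a single step. Given an arbitrary principal 3D slice $\cT^p$ of $\cK_{n,c}^p$ with $n \geq 3$, I would invoke Theorem \ref{Thm:Characterization3DSlicesEven} to obtain an equivalence $\cT^p \sim \widetilde{\cT}^p$, where $\widetilde{\cT}^p$ is whichever of the four canonical representatives shares the sign pattern $(\ums{1}, \ums{2}, \ums{3})$ with $\cT^p$. The representative $\widetilde{\cT}^p$ is then a principal 3D slice of $\cK_{3,c}^p$, which is exactly what the corollary requires.

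The main point worth verifying, and really the only point beyond a syntactic inspection, is that the four representatives produced by the proof of Theorem \ref{Thm:Characterization3DSlicesEven} indeed lie inside $\mM\mC(3)$; this is immediate once one recalls the convention $\jm{1} = \im{1}\im{2}$, $\jm{2} = \im{1}\im{3}$ introduced in Section \ref{Sec:PrelimMulticomplex}. There is no new linear map $\varphi$ to construct, since the one built inside the proof of the theorem already implements the equivalence between a higher-order slice and its tricomplex counterpart.

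The only subtlety is that the statement of the corollary allows $p \geq 2$ without restricting to even $p$. Since the preceding theorem in the paper concerns only the even case, the argument above covers the corollary as a consequence of the even case; the odd case portions of Theorem \ref{Thm:main} will be required to cover the remaining values of $p$, and one would have to flag that part (3) of Theorem \ref{Thm:main} is compatible with the corollary only when the $n > 3$ extra equivalence class in the odd, $c \neq 0$ setting is handled separately.
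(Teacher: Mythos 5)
Your argument is exactly the paper's. The paper provides no formal proof of this corollary; it relies on the remark immediately preceding it that all four representatives produced in the proof of Theorem~\ref{Thm:Characterization3DSlicesEven} lie inside $\mI(3)$, which is precisely the observation you extract.

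Your closing caveat, however, deserves to be stated as an outright objection rather than a loose end to be handled later. The corollary's hypothesis $p \geq 2$ with arbitrary $c \in \mR$ is incompatible with part 3 of Theorem~\ref{Thm:main}. When $p$ is odd, $c \neq 0$, and $n > 3$, the equivalence class of $\cT^p(\um{1}, \um{2}, \um{3})$ with all three units hyperbolic and $\um{1}\um{2} \neq \pm \um{3}$ has no tricomplex representative: the paper itself records that such a configuration ``is impossible when $n = 3$,'' and no tricomplex slice with a different unit signature can be equivalent to it either, since the iterate space there is $\mS(\um{1},\um{2},\um{3})$, an eight-dimensional commutative algebra generated by hyperbolic units and therefore isomorphic to $\mR^8$, whereas the only eight-dimensional subalgebra of $\mM \mC (3)$ is $\mM \mC (3)$ itself, isomorphic to $\mC^4$. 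These algebras are not isomorphic (for instance, $\zeta^3 = \zeta$ has $3^8$ solutions in $\mR^8$ but only $3^4$ in $\mC^4$, and Definition~\ref{Def:EquivalentRelation3Dslices} forces $\varphi(\zeta^p) = \varphi(\zeta)^p$), so the required $\varphi$ cannot exist. The corollary is correct as a consequence of Theorem~\ref{Thm:Characterization3DSlicesEven} if $p$ is restricted to be even, or more broadly to $p$ even or $c = 0$; its placement directly after the even-power theorem strongly suggests that was the intended scope.
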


\subsection{Main result for odd powers}
We now treat the case when $p$ is an odd integer and $c \neq 0$. The case $c = 0$ is treated very similarly as in the previous section. This comes from the fact that when $c = 0$, the space of iterates is equal to $\mM (\um{1}, \um{2}, \um{3})$ and this characterization of the space of iterates is valid for any choice of principal units. We therefore end up with the same characterization as in the even case and this proves part 2 of \ref{Thm:main}.

\begin{theorem}
    Let $p$ be an odd integer and $c = 0$. Then there are $4$ distinct principal 3D slices.
\end{theorem}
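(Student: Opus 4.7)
The plan is to mimic the even-power argument of Theorem \ref{Thm:Characterization3DSlicesEven}, adapting it to the new iterate space. Since $p$ is odd and $c = 0$, Theorem \ref{Thm:CharacterizationIterates} part (\ref{Thm:CharacterizationIteratesOddCZero}) gives $\mathrm{It}^p(\um{1}, \um{2}, \um{3}) = \mM(\um{1}, \um{2}, \um{3})$ for \emph{every} choice of three distinct principal units, with no dichotomy based on the nature of the units. Given two triples $(\um{1}, \um{2}, \um{3})$ and $(\vm{1}, \vm{2}, \vm{3})$ satisfying $\ums{i} = \vms{i}$ for $i = 1, 2, 3$ (which can be arranged whenever $n \geq 3$), I would define the linear map $\varphi : \mM(\um{1}, \um{2}, \um{3}) \to \mM(\vm{1}, \vm{2}, \vm{3})$ on the basis by $\varphi(\um{i}) = \vm{i}$ and $\varphi(\um{1}\um{2}\um{3}) = \vm{1}\vm{2}\vm{3}$, extended linearly. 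This is visibly a bijective linear map between two 4-dimensional subspaces.

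With $f_c(\zeta) = \zeta^p$, both conditions of Definition \ref{Def:EquivalentRelation3Dslices} reduce to showing that $\varphi$ preserves $p$-th powers, for elements of $\mT(\um{1}, \um{2}, \um{3})$ (condition (1)) and of $\mathrm{It}^p(\um{1}, \um{2}, \um{3}) = \mM(\um{1}, \um{2}, \um{3})$ (condition (2)). Expanding via the multinomial theorem, each pure monomial $\um{1}^{a} \um{2}^{b} \um{3}^{c}$ (with $a + b + c$ odd) reduces through $\um{i}^2 = \ums{i}$ to either $s \um{i}$ for a single index $i$ or $s \um{1} \um{2} \um{3}$, where the sign $s$ is a product of $\ums{1}, \ums{2}, \ums{3}$; the parity forces one of these two forms since $a + b + c$ is odd. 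The same phenomenon occurs for an element of $\mM(\um{1}, \um{2}, \um{3})$ raised to an odd power, since the extra basis element $\um{1}\um{2}\um{3}$ contributes an odd number of each $\um{i}$ per occurrence. Because the sign $s$ depends only on the scalar inputs and on $\ums{1}, \ums{2}, \ums{3} = \vms{1}, \vms{2}, \vms{3}$, substituting $\um{i} \mapsto \vm{i}$ throughout yields exactly the corresponding expansion in the $\vm{}$-variables, i.e.\ $\varphi(\eta^p) = (\varphi(\eta))^p$.

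The main technical obstacle is that $\mM(\um{1}, \um{2}, \um{3})$ is \emph{not} closed under multiplication: by Theorem \ref{Thm:ClosureOfSubAlgebra} part (\ref{it:M-times-M-equals-L}), a product of two elements of $\mM$ lands in $\mL$, so intermediate powers oscillate between $\mM$ and $\mL$. The cleanest way to organise the verification is to extend $\varphi$ to the full 8-dimensional subspace $\mS(\um{1}, \um{2}, \um{3})$ by additionally setting $\varphi(\um{i}\um{j}) = \vm{i}\vm{j}$ for each pair. By Theorem \ref{Thm:ClosureOfSubAlgebra} part (\ref{it:ClosureofS}), $\mS$ is closed under multiplication, and because every entry of its multiplication table is determined (up to a basis element) by the signs $\ums{i}$, the matching of squares makes this extended $\varphi$ a ring homomorphism on $\mS$. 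Restricting to $\mM \subseteq \mS$ then yields $\varphi(\zeta^p) = (\varphi(\zeta))^p$ for all $\zeta \in \mM(\um{1}, \um{2}, \um{3})$ and in particular for $\zeta \in \mT(\um{1}, \um{2}, \um{3})$.

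Finally, for the counting, the equivalence class of $\cT^p(\um{1}, \um{2}, \um{3})$ is determined entirely by the unordered multiset $\{\ums{1}, \ums{2}, \ums{3}\} \subseteq \{+1, -1\}$, thanks to the ordering convention established before Theorem \ref{Thm:Characterization3DSlicesEven}. There are therefore exactly four equivalence classes, with representatives
\[
\cT^p(1, \jm{1}, \jm{2}), \quad \cT^p(1, \im{1}, \jm{1}), \quad \cT^p(1, \im{1}, \im{2}), \quad \cT^p(\im{1}, \im{2}, \im{3}),
\]
matching the four classes obtained in the even-power case.
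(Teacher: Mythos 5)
Your overall plan matches the paper's (very terse) argument: invoke Theorem~\ref{Thm:CharacterizationIterates} part~(\ref{Thm:CharacterizationIteratesOddCZero}) to see that $\mathrm{It}^p = \mM(\um{1},\um{2},\um{3})$ for every choice of units, build $\varphi$ on the four-element basis of $\mM$, check that it commutes with $\zeta \mapsto \zeta^p$, and then count equivalence classes by the multiset of squares. Your second paragraph --- the direct multinomial expansion of $\zeta^p$ for $\zeta \in \mM(\um{1}, \um{2}, \um{3})$ --- is in fact the right way to make this rigorous, and it fills in a genuine gap that the paper papers over with ``the case $c=0$ is treated very similarly.'' The even case relied on $\mL$ being closed under multiplication (Theorem~\ref{Thm:ClosureOfSubAlgebra}~\eqref{it:LclosedUnderMultiplication}) to push $\varphi(\zeta\eta)=\varphi(\zeta)\varphi(\eta)$ through by induction; $\mM$ is not closed, so that induction does not transfer directly, and a one-shot multinomial expansion is exactly what is needed. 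You should sharpen it only slightly by noting that $\{\um{1},\um{2},\um{3},\um{1}\um{2}\um{3}\}$ is always linearly independent (so the coefficient comparison is legitimate).

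However, the ``cleanest way'' you propose in your third paragraph --- extending $\varphi$ to a ring homomorphism on $\mS(\um{1},\um{2},\um{3})$ --- does not work in general and should be dropped. The obstruction is that one of the two triples may satisfy $\um{k}\um{l} = \pm\um{h}$ while the other does not, even though they have matching squares, and then the two spaces $\mS$ have different dimensions and the proposed extension is not even well defined. Concretely, take $\um{1}=\im{1}$, $\um{2}=\im{2}$, $\um{3}=\jm{1}$ (so $\um{1}\um{2}=\um{3}$, $\um{1}\um{2}\um{3}=1$, and $\mS(\um{1},\um{2},\um{3})$ is $4$-dimensional) and $\vm{1}=\im{1}$, $\vm{2}=\im{2}$, $\vm{3}=\jm{2}$ (where $\vm{1}\vm{2}=\jm{1}\neq\pm\vm{3}$ and $\mS(\vm{1},\vm{2},\vm{3})$ is $8$-dimensional). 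Both triples lie in your third representative class $\{\ums{1},\ums{2},\ums{3}\}=\{-1,-1,1\}$, so the proof must cover this pair. But your extension would have to send $\um{1}\um{2}$, which equals $\um{3}$ as a multicomplex number, simultaneously to $\vm{1}\vm{2}=\jm{1}$ and to $\vm{3}=\jm{2}$, a contradiction; and since a unital ring homomorphism must send $1$ to $1$, it cannot send $1=\um{1}\um{2}\um{3}$ to $\vm{1}\vm{2}\vm{3}=-\jm{3}$ either. Your second paragraph handles this pair without any trouble, precisely because it never asks $\varphi$ to respect products of elements that leave $\mM$, so keep that as the argument and discard the appeal to $\mS$.
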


It remains to prove part 3 of Theorem \ref{Thm:main}, that is when $c \neq 0$.

\begin{theorem}
    Let $p$ be an odd integer and $c \neq 0$. Then there are $8$ distinct principal slices when $n = 3$ and $9$ distinct principal 3D slices when $n \geq 4$.
\end{theorem}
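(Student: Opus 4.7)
The key split is by the dimension of the iterate space. By Theorem~\ref{Thm:CharacterizationIterates}(3), when $p$ is odd and $c \neq 0$, the space $\mathrm{It}^p(\um{1}, \um{2}, \um{3})$ equals the $4$-dimensional $\mM(\um{1}, \um{2}, \um{3})$ (which coincides with $\mL(\um{1}, \um{2}, \um{3})$) if the triple is \emph{closed} --- i.e.\ contains $1$ or satisfies $\um{k}\um{l} = \pm\um{h}$ for some distinct $k,l,h \in \{1,2,3\}$ --- and equals the $8$-dimensional $\mS(\um{1}, \um{2}, \um{3})$ if the triple is \emph{open}. Since an equivalence under Definition~\ref{Def:EquivalentRelation3Dslices} requires a linear bijection between the two iterate spaces, a closed triple and an open triple cannot lie in the same equivalence class, so the classification splits cleanly into these two regimes.

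Within each regime, I would mimic the proof of Theorem~\ref{Thm:Characterization3DSlicesEven}. Given triples $(\um{1}, \um{2}, \um{3})$ and $(\vm{1}, \vm{2}, \vm{3})$ sharing the sign pattern $\ums{i} = \vms{i}$ (such pairs always exist in $\mI(n)$ for $n \geq 3$): in the closed case, define $\varphi \colon \mL(\um{1}, \um{2}, \um{3}) \to \mL(\vm{1}, \vm{2}, \vm{3})$ on the basis $\{1, \um{1}\um{2}, \um{1}\um{3}, \um{2}\um{3}\}$ by $\varphi(1) = 1$ and $\varphi(\um{i}\um{j}) = \vm{i}\vm{j}$; in the open case, define $\varphi \colon \mS(\um{1}, \um{2}, \um{3}) \to \mS(\vm{1}, \vm{2}, \vm{3})$ on the basis $\{1, \um{i}, \um{i}\um{j}, \um{1}\um{2}\um{3}\}$ by the analogous bijection. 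In each case the multiplication table of the ambient algebra depends only on $(\ums{1}, \ums{2}, \ums{3})$, so $\varphi$ is a ring homomorphism; combined with $\mT(\um{1}, \um{2}, \um{3}) \subseteq \mathrm{It}^p$ (by Theorem~\ref{Thm:ClosureOfSubAlgebra}(6) in the closed case, trivially in the open case), this produces both conditions of Definition~\ref{Def:EquivalentRelation3Dslices} by the same argument as in the even proof.

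It remains to enumerate. Using the ordering convention, the seven possible nature patterns of a triple are (1, imaginary, imaginary), (1, imaginary, hyperbolic), (1, hyperbolic, hyperbolic), (imaginary, imaginary, imaginary), (imaginary, imaginary, hyperbolic), (imaginary, hyperbolic, hyperbolic), and (hyperbolic, hyperbolic, hyperbolic). The first three patterns are always closed because they contain~$1$. The pattern (imaginary, imaginary, imaginary) is never closed, since a product of two imaginary units is hyperbolic or real; similarly (imaginary, hyperbolic, hyperbolic) is never closed. The pattern (imaginary, imaginary, hyperbolic) admits both closed triples (e.g.\ $(\im{1}, \im{2}, \jm{1})$ with $\im{1}\im{2} = \jm{1}$) and open ones (e.g.\ $(\im{1}, \im{2}, \jm{3})$), contributing two classes. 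Finally, (hyperbolic, hyperbolic, hyperbolic) is always closed in $\mM\mC(3)$, where the unique such triple $(\jm{1}, \jm{2}, \jm{3})$ satisfies $\jm{1}\jm{2} = -\jm{3}$, contributing one class; but as soon as $n \geq 4$, a new open triple of three hyperbolic units appears (for instance $(\jm{1}, \jm{2}, \im{3}\im{4})$), adding a second class. The totals are $3 + 1 + 1 + 2 + 1 = 8$ for $n = 3$ and $3 + 1 + 1 + 2 + 2 = 9$ for $n \geq 4$, matching the statement.

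The main technical obstacle will be the bookkeeping in the closed (imaginary, imaginary, hyperbolic) case: two closed triples with identical sign pattern may differ in the sign of $\um{1}\um{2}\um{3}$, so the construction of $\varphi$ may have to be composed with an appropriate sign flip on one of the $\vm{i}$'s in order that the two conditions of Definition~\ref{Def:EquivalentRelation3Dslices} hold simultaneously. A secondary subtlety is to verify that the hyperbolic triple $(\jm{1}, \jm{2}, \jm{3})$ is the \emph{only} triple of three hyperbolic units available in $\mI(3)$ and that it is closed, while open three-hyperbolic triples become available whenever $n \geq 4$; this follows from a direct inspection of the multiplication structure of the hyperbolic units in $\mI(n)$.
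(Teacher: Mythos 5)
Your approach is essentially the paper's: split the enumeration by the dimension of the iterate space (the $4$-dimensional $\mM = \mL$ for closed triples, the $8$-dimensional $\mS$ for open ones), define $\varphi$ on the basis of the ambient algebra so that it is a ring homomorphism, and then enumerate by the sign pattern $(\ums{1},\ums{2},\ums{3})$; your tally of cases matches the paper's three-way case split.

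The sign issue you flag at the end as a ``technical obstacle'' is, however, a genuine gap — and it is present in the paper's proof as well, not just in your sketch. In the closed case, conditions \eqref{Eq:ConditionEquivalent3DSlicesFirstIterates} and \eqref{Eq:ConditionEquivalent3DSlices}, evaluated on the axes $(x,0,0)$, $(0,y,0)$, $(0,0,z)$ and at the origin, \emph{force} $\varphi(1)=1$ (using $c\neq 0$) and $\varphi(\um{i})=\vm{i}$; there is no remaining freedom to compose with a sign flip, since $\varphi$ is then uniquely determined on the $4$-dimensional basis $\{1,\um{1},\um{2},\um{3}\}$ of $\mM(\um{1},\um{2},\um{3})$. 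Now take $(\um{1},\um{2},\um{3})=(\im{1},\im{2},\jm{1})$, where $\im{1}\im{2}=+\jm{1}$, and $(\vm{1},\vm{2},\vm{3})=(\im{3},\im{4},\jm{1})$, where $\im{3}\im{4}=-\jm{1}$: both are closed with sign pattern $(-1,-1,+1)$, yet a direct computation (for $p=3$, say) gives $\zeta^3 = 6xyz + P\im{1}+Q\im{2}+R\jm{1}$ and $\eta^3 = -6xyz + P\im{3}+Q\im{4}+R\jm{1}$, so $\varphi(f_c(\zeta))$ and $f_c(\eta)$ differ in their real parts by $12xyz$ and condition \eqref{Eq:ConditionEquivalent3DSlicesFirstIterates} fails. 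The closed (imaginary, imaginary, hyperbolic) pattern therefore splits into \emph{two} equivalence classes distinguished by the sign of $\um{1}\um{2}\um{3}$, and an analogous dichotomy appears among closed hyperbolic triples once $n\geq 4$ (compare $(\jm{1},\jm{2},\jm{3})$ with $\jm{1}\jm{2}\jm{3}=-1$ to $(\im{1}\im{2},\im{3}\im{4},\im{1}\im{2}\im{3}\im{4})$ with product $+1$). The obstruction vanishes when $c=0$, where $\varphi(1)$ is unconstrained and one can take $\varphi$ to be $(-1)$ times a ring isomorphism so that $\varphi(\zeta^p)=(\varphi\zeta)^p$ still holds for odd $p$; and it vanishes when $p$ is even because the paper there proves $\varphi(\zeta^p)=(x\vm{1}+y\vm{2}+z\vm{3})^p$ directly rather than via $\mT\subseteq\mathrm{It}^p$. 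But for odd $p$ with $c\neq 0$, both your sketch and the paper's proof leave this case unjustified, and the claimed totals ($8$ at $n=3$, $9$ at $n\geq 4$) appear to undercount.
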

\begin{proof}
    Assume that $p$ is an odd integer and $c \neq 0$. Let $\um{1}, \um{2}, \um{3} \in \mI (n)$ and choose $\vm{1}, \vm{2}, \vm{3} \in \mI (n)$ such that $\ums{1} = \vms{1}$, $\ums{2} = \vms{2}$, and $\ums{3} = \vms{3}$. Such units always exist if $n \geq 3$. We will show that $\cT^p (\um{1}, \um{2}, \um{3}) \sim \cT^p (\vm{1}, \vm{2}, \vm{3})$, but under additional assumptions on the principal units. We therefore split the proof into $3$ cases.
    \begin{enumerate}
        \item Assume that $\um{1} = 1$. Theorem \ref{Thm:CharacterizationIterates} part (\ref{Thm:CharacterizationIteratesOddCNotZeroClosed}) implies that $\mathrm{It}^p (\um{1}, \um{2}, \um{3}) = \mM (1, \um{2}, \um{3})$. Therefore, assume further that $\vm{1} = 1$. Define $\varphi : \mM (1, \um{2}, \um{3}) \ra \mM (1, \vm{2}, \vm{3})$ on the basis elements and by linearity: $\varphi (1) = 1$, $\varphi (\um{2}) = \vm{2}$, $\varphi (\um{3}) = \vm{3}$, and $\varphi (\um{2}\um{3}) = \vm{2} \vm{3}$. In a very similar way as in the proof of Theorem \ref{Thm:Characterization3DSlicesEven}, we can show that $\varphi (\zeta \eta ) = \varphi (\zeta ) \varphi (\eta )$ for any $\zeta , \eta \in \mM (1, \um{2}, \um{3})$ so that $\varphi (\zeta^p) = (\varphi (\zeta ))^p$ for any $\zeta \in \mM (1, \um{2}, \um{3})$. Hence, we get $\varphi (f_c (\zeta )) = f_c (\varphi (\zeta ))$. This implies that $\cT^p (\um{1}, \um{2}, \um{3}) \sim \cT^p (\vm{1}, \vm{2}, \vm{3})$ because condition (1) in Definition \ref{Def:EquivalentRelation3Dslices} is automatically verified from the fact that condition (2) is satisfied and $\mT (1, \um{2} , \um{3}) \subset \mM (1, 
        \um{2} , \um{3})$ and $\mT (1 , \vm{2} , \vm{3}) \subset \mM (1 , \vm{2} , \vm{3} )$. There are therefore three cases:
            \begin{enumerate}
                \item $\ums{2} = \ums{3} = 1$. In this case, all 3D slices are equivalent to $\cT^p (1 , \jm{1}, \jm{2})$.
                \item $\ums{2} = -1$ and $\ums{3} = 1$. In this case, all 3D slices are equivalent to $\cT^p (1, \im{1}, \jm{1})$.
                \item $\ums{2} = \ums{3} = -1$. In this case, all 3D slices are equivalent to $\cT^p (1, \im{1}, \im{2})$. 
            \end{enumerate}
        \item Assume that $\um{1} \neq 1$, but $\um{1} \um{2} = \pm \um{3}$. Theorem \ref{Thm:CharacterizationIterates} part (\ref{Thm:CharacterizationIteratesOddCNotZeroClosed}) implies that $\mathrm{It}^p (\um{1}, \um{2}, \um{3}) = \mM (1, \um{1}, \um{2})$. Therefore, assume further that $\vm{1} \vm{2} = \pm \vm{3}$. Define $\varphi : \mM (1, \um{1}, \um{2} ) \ra \mM (1, \vm{1}, \vm{3})$ on the basis elements and extend it by linearity: $\varphi (1) = 1$, $\varphi (\um{1}) = \vm{1}$, $\varphi (\um{2}) = \vm{2}$, and $\varphi (\um{1}\um{2}) = \vm{1} \vm{2}$. In a very similar way as in the proof of Theorem \ref{Thm:Characterization3DSlicesEven}, we can show that $\varphi (\zeta \eta ) = \varphi (\zeta ) \varphi (\eta )$ for any $\zeta , \eta \in \mM (1, \um{1}, \um{2})$ so that $\varphi (\zeta^p) = (\varphi (\zeta ))^p$ for any $\zeta \in \mM (1, \um{1}, \um{2})$. Hence, we get $\varphi (f_c (\zeta )) = f_c (\varphi (\zeta ))$. This implies that $\cT^p (\um{1}, \um{2}, \um{3}) \sim \cT^p (\vm{1}, \vm{2}, \vm{3})$ because condition (1) in Definition \ref{Def:EquivalentRelation3Dslices} is automatically verified from the fact that condition (2) is satisfied and $\mT (1, \um{2} , \um{3}) \subset \mM (1, 
        \um{2} , \um{3})$ and $\mT (1 , \vm{2} , \vm{3}) \subset \mM (1 , \vm{2} , \vm{3} )$. Since $\um{1} \um{2} = \pm \um{3}$, the nature of the principal units $\um{1}$ and $\um{2}$ forces a specific type for $\um{3}$. This gives three cases:
            \begin{enumerate}
                \item $\ums{1} = \ums{2} = 1$. In this case, $(\pm \um{3})^2 = \ums{1} \ums{2} = 1$. Hence $\um{3}$ is an hyperbolic unit and all 3D slices are equivalent to $\cT^p (\jm{1}, \jm{2}, \jm{3})$.
                \item $\ums{1} = -1$ and $\ums{2} = 1$. In this case, $(\pm \um{3})^2 = \ums{1} \ums{2} = -1$. Hence $\um{3}$ is an imaginary unit and all 3D slices are equivalent to $\cT^p ( \im{1}, \im{2}, \jm{1})$.
                \item $\ums{1} = \ums{2} = -1$. In this case, $\um{3}$ is a hyperbolic unit. Therefore, all 3D slices are equivalent to $\cT^p (\im{1}, \im{2}, \jm{1})$. This is case (b).
            \end{enumerate}
        \item Assume that $\um{1} \neq 1$ and $\um{1} \um{2} \neq \pm \um{3}$. Theorem \ref{Thm:CharacterizationIterates} part (\ref{Thm:CharacterizationIteratesOddCNotZeroNotClosed}) implies that $\mathrm{It}^p (\um{1}, \um{2}, \um{3}) = \mS (\um{1}, \um{2}, \um{3})$. Assume further that $\vm{1} \neq 1$ and $\vm{1} \vm{2} \neq \pm \vm{3}$. 
        
        Define $\varphi : \mS (\um{1}, \um{2}, \um{3}) \ra \mS (\vm{1}, \vm{2}, \vm{3})$ by the following formula:
            \begin{align*} 
                \varphi (x_1 + x_2 \um{1} + x_3 \um{2} + x_4 \um{3} + x_5 \um{1} \um{2} + x_6 \um{1} \um{3} + x_7 \um{2} \um{3} + x_8 \um{1} \um{2} \um{3} ) \\ 
                = x_1 + x_2 \vm{1} + x_3 \vm{2} + x_4 \vm{3} + x_5 \vm{1} \vm{2} + x_6 \vm{1} \vm{3} + x_7 \um{2} \vm{3} + x_8 \vm{1} \vm{2} \vm{3} 
            \end{align*}
        In a very similar way as in the proof of Theorem \ref{Thm:Characterization3DSlicesEven}, we can show that $\varphi (\zeta \eta ) = \varphi (\zeta ) \varphi (\eta )$ for any $\zeta , \eta \in \mS (\um{1}, \um{2}, \um{3})$ so that $\varphi (\zeta^p) = (\varphi (\zeta ))^p$ for any $\zeta \in \mS (\um{1}, \um{2}, \um{3})$. Hence, we get $\varphi (f_c (\zeta )) = f_c (\varphi (\zeta ))$. This implies that $\cT^p (\um{1}, \um{2}, \um{3}) \sim \cT^p (\vm{1}, \vm{2}, \vm{3})$. This gives four cases:
            \begin{enumerate}
                \item $\ums{1} = \ums{2} = \ums{3} = 1$. This case is impossible when $n = 3$ because all hyperbolic units are such the product of two is equal to the third. For $n > 3$, choose $\vm{1} = \im{1} \im{2}$, $\vm{2} = \im{1} \im{3}$, and $\vm{3} = \im{1} \im{4}$. Then all 3D slices are equivalent to that particular 3D slice $\cT^p (\vm{1}, \vm{2}, \vm{3})$.
                \item $\ums{1} = -1$ and $\ums{2} = \ums{3} = 1$. In this case, choose $\vm{1} = \im{1}$, $\vm{2} = \jm{1}$, and $\vm{3} = \jm{2}$, then all 3D slices are equivalent to $\cT^p ( \im{1}, \im{2}, \jm{2})$.
                \item $\ums{1} = \ums{2} = -1$ and $\ums{3} = 1$. In this case, choose $\vm{1} = \im{1}$, $\vm{2} = \im{2}$, and $\vm{3} = \jm{2}$, then all 3D slices are equivalent to $\cT^p ( \im{1}, \im{2}, \jm{2})$.
                \item $\ums{1} = \ums{2} = \ums{3} = -1$. In this case, choose $\vm{1} = \im{1}$, $\vm{2} = \im{2}$, and $\vm{3} = \im{3}$, then all 3D slices are equivalent to $\cT^p ( \im{1}, \im{2}, \im{3})$.
            \end{enumerate}
        After compiling all the possible cases, we see that when $n= 3$, there are $8$ principal 3D slices and when $n \geq 4$, there are $9$ principal 3D slices. 
    \end{enumerate}
    This concludes the proof. \qed
\end{proof}

In figure \ref{Fig:3DPSlices-Specialcase}, we illustrate all the possible principle 3D slices when $n = 3$, $p = 3$ and $c = 0.25$.

\begin{figure}
    \centering
    \begin{subfigure}{0.2\textwidth}
        \includegraphics[width=\textwidth]{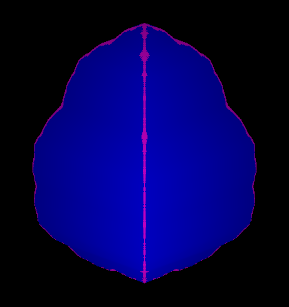}
        \caption{$\cT^3(1,\im{1},\im{2})$}
        \label{fig:octahedron1}
    \end{subfigure}
    \begin{subfigure}{0.2\textwidth}
        \includegraphics[width=\textwidth]{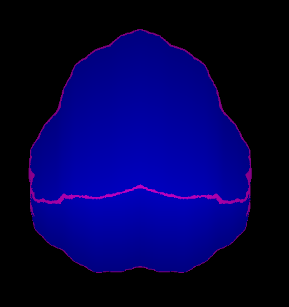}
        \caption{$\cT^3(1,\im{1},\jm{1})$}
        \label{fig:octahedron2}
    \end{subfigure}
    \begin{subfigure}{0.2\textwidth}
        \includegraphics[width=\textwidth]{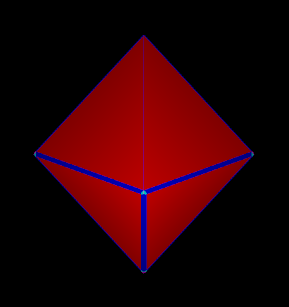}
        \caption{$\cT^3(1,\jm{1},\jm{2})$}
        \label{fig:octahedron3}
    \end{subfigure}
    \begin{subfigure}{0.2\textwidth}
        \includegraphics[width=\textwidth]{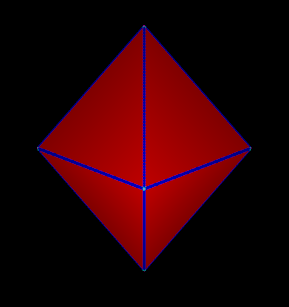}
        \caption{$\cT^3(\im{1}, \im{2}, \im{3})$}
        \label{fig:octahedron4}
    \end{subfigure}
    \vspace*{0.2cm}

    \begin{subfigure}{0.2\textwidth}
        \includegraphics[width=\textwidth]{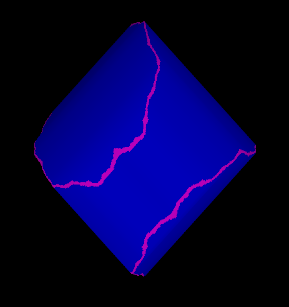}
        \caption{$\cT^3(\im{1}, \im{2}, \jm{1})$}
        \label{fig:octahedron5}
    \end{subfigure}
    \begin{subfigure}{0.2\textwidth}
        \includegraphics[width=\textwidth]{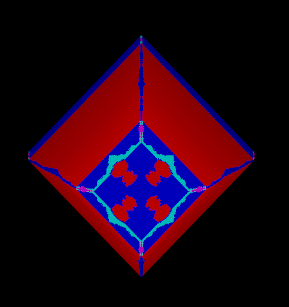}
        \caption{$\cT^3(\im{1}, \im{2}, \jm{2})$}
        \label{fig:octahedron6}
    \end{subfigure}
    \begin{subfigure}{0.2\textwidth}
        \includegraphics[width=\textwidth]{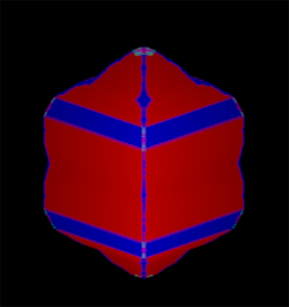}
        \caption{$\cT^3(\im{1}, \jm{1}, \jm{2})$}
        \label{fig:octahedron7}
    \end{subfigure}
    \begin{subfigure}{0.2\textwidth}
        \includegraphics[width=\textwidth]{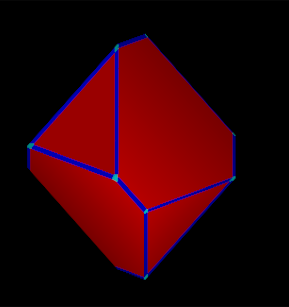}
        \caption{$\cT^3(\jm{1}, \jm{2}, \jm{3})$}
        \label{fig:octahedron8}
    \end{subfigure}
    \caption{Graphical results of principal 3D slices of the filled-in Julia set $\cK_{3, 0.25}^3$}\label{Fig:3DPSlices-Specialcase}
\end{figure}

\section{Conclusion}

In summary, we have introduced the multicomplex numbers and their topology to investigate the filled-in Julia sets in higher dimensions. The multidimensional version of the filled-in Julia sets was introduced in Section \ref{Sec:MulticomplexFilledJ} and an algorithm was presented to visualize them with a computer. In Section \ref{Sec:DefinitionEquivalence3D}, we introduced an equivalence relation as our basis for the analysis of the principal 3D slices. We then proved our main results on the number of 3D slices of the multicomplex filled-in Julia set in section \ref{Sec:Characterization3DSlices}. 

As a possible future direction of work, it would be interested to investigate in more details each equivalent classes obtained in our characterization. For instance,  based on preliminary computer explorations, the principal 3D slice $\cT^p (1, \jm{1}, \jm{2})$ seems to always be an octahedron for any values of $c \in \cM^p \cap \mR$. Is that always the case? Some further mathematical investigations are necessary to elucidate this question.


%
%

\bibliographystyle{spmpsci}
\bibliography{reference.bib}

\begin{thebibliography}{10}
\providecommand{\url}[1]{{#1}}
\providecommand{\urlprefix}{URL }
\expandafter\ifx\csname urlstyle\endcsname\relax
  \providecommand{\doi}[1]{DOI~\discretionary{}{}{}#1}\else
  \providecommand{\doi}{DOI~\discretionary{}{}{}\begingroup
  \urlstyle{rm}\Url}\fi

\bibitem{Alpay2023}
Alpay, D., Diki, K., Vajiac, M.: A note on the complex and bicomplex valued
  neural networks.
\newblock Appl. Math. Comput. \textbf{445}, Paper No. 127864, 12 (2023)

\bibitem{B2017}
Banerjee, A.: On the quantum mechanics of bicomplex {H}amiltonian system.
\newblock Ann. Physics \textbf{377}, 493--505 (2017)

\bibitem{BPR2019}
Brouillette, G., Paris\'{e}, P.O., Rochon, D.: Tricomplex distance estimation
  for filled-in julia sets and multibrot sets.
\newblock International Journal of Bifurcation and Chaos \textbf{29}(06),
  1950085 (2019).
\newblock \doi{10.1142/S0218127419500858}.
\newblock Preprint

\bibitem{Pierre_Guillaume_Dominic_2019}
Brouillette, G., Parisé, P.O., Rochon, D.: {T}ricomplex {D}istance
  {E}stimation for {F}illed-{I}n {J}ulia {S}ets and {M}ultibrot {S}ets.
\newblock International Journal of Bifurcation and Chaos \textbf{29}(06),
  1950085 (2019).
\newblock \doi{10.1142/s0218127419500858}.
\newblock \urlprefix\url{http://dx.doi.org/10.1142/S0218127419500858}

\bibitem{Brouillette_2019}
Brouillette, G., Rochon, D.: {C}haracterization of the {P}rincipal 3d {S}lices
  {R}elated to the {M}ulticomplex {M}andelbrot {S}et.
\newblock Advances in Applied Clifford Algebras \textbf{29}(3) (2019).
\newblock \doi{10.1007/s00006-019-0956-1}.
\newblock \urlprefix\url{http://dx.doi.org/10.1007/s00006-019-0956-1}

\bibitem{Cen2020}
Cen, J., Fring, A.: Multicomplex solitons.
\newblock J. Nonlinear Math. Phys. \textbf{27}(1), 17--35 (2020)

\bibitem{doyon2022}
Doyon, N., Parisé, P.O., Verreault, W.: Counting involutions on multicomplex
  numbers (2022).
\newblock \urlprefix\url{https://arxiv.org/abs/2211.13875}

\bibitem{F2014}
Falconer, K.: Fractal geometry, third edn.
\newblock John Wiley \& Sons, Ltd., Chichester (2014).
\newblock Mathematical foundations and applications

\bibitem{PR2009}
Garant-Pelletier, V., Rochon, D.: On a generalized {F}atou-{J}ulia theorem in
  multicomplex spaces.
\newblock Fractals \textbf{17}(3), 241--255 (2009).
\newblock \doi{10.1142/S0218348X09004326}.
\newblock \urlprefix\url{https://doi.org/10.1142/S0218348X09004326}

\bibitem{K2023}
K\"oplinger, J.: Phenomenology from {D}irac equation with
  {E}uclidean-{M}inkowskian ``gravity phase''.
\newblock Internat. J. Theoret. Phys. \textbf{62}(2), Paper No. 35, 23 (2023)

\bibitem{Mandel1975}
Mandelbrot, B.: Les objets fractals.
\newblock Flammarion, Editeur, Paris (1975).
\newblock Forme, hasard et dimension, Nouvelle Biblioth\`eque Scientifique

\bibitem{May1976}
May, R.M.: Simple mathematical models with very complicated dynamics.
\newblock Nature \textbf{261}(5560), 459--467 (1976)

\bibitem{Norton1982}
Norton, A.: Generation and display of geometric fractals in 3-d.
\newblock SIGGRAPH Comput. Graph. \textbf{16}(3), 61–67 (1982).
\newblock \doi{10.1145/965145.801263}.
\newblock \urlprefix\url{https://doi.org/10.1145/965145.801263}

\bibitem{PR2015}
Paris\'e, P.O., Rochon, D.: A study of dynamics of the tricomplex polynomial
  {$\eta^p+c$}.
\newblock Nonlinear Dynam. \textbf{82}(1-2), 157--171 (2015).
\newblock \doi{10.1007/s11071-015-2146-6}.
\newblock \urlprefix\url{https://doi.org/10.1007/s11071-015-2146-6}

\bibitem{Pr1991}
Price, G.B.: An introduction to multicomplex spaces and functions,
  \emph{Monographs and Textbooks in Pure and Applied Mathematics}, vol. 140.
\newblock Marcel Dekker, Inc., New York (1991)

\bibitem{rochon2000}
Rochon, D.: A generalized mandelbrot set for bicomplex numbers.
\newblock Fractals \textbf{08}(04), 355--368 (2000)

\bibitem{rochon2003}
Rochon, D.: On a generalized fatou-julia theorem.
\newblock Fractals \textbf{11}(03), 213--219 (2003).
\newblock \doi{10.1142/S0218348X03002075}

\bibitem{rochon2004}
Rochon, D., Tremblay, S.: Bicomplex quantum mechanics. {I}. {T}he generalized
  {S}chr\"odinger equation.
\newblock Adv. Appl. Clifford Algebr. \textbf{14}(2), 231--248 (2004)

\bibitem{TVG2017}
Theaker, K.A., Van~Gorder, R.A.: Multicomplex wave functions for linear and
  nonlinear {S}chr\"odinger equations.
\newblock Adv. Appl. Clifford Algebr. \textbf{27}(2), 1857--1879 (2017)

\bibitem{Wang2013}
Wang, X.y., Song, W.j.: The generalized {M}-{J} sets for bicomplex numbers.
\newblock Nonlinear Dynam. \textbf{72}(1-2), 17--26 (2013)

\end{thebibliography}

\end{document}